\setlist[enumerate]{leftmargin=.5in}
\setlist[itemize]{leftmargin=.5in}
\crefname{hypothesis}{Hypothesis}{Hypotheses}
\title{An \textit{a posteriori} error estimate for a 0D/2D coupled model
\thanks{\funding{This work was supported by the French ‘Investissements d’Avenir’ program, project Agence
Nationale de la Recherche (ISITE-BFC) (contract ANR-15-IDEX-0003). }}}
\author{
Hussein Albazzal\thanks{Université de Franche-Comté, CNRS, LmB, F-25000 Besançon, France 
  (\email{hussein.albazzal@univ-fcomte.fr}).}
\and Alexei Lozinski\thanks{Université de Franche-Comté, CNRS, LmB, F-25000 Besançon, France
  (\email{alexei.lozinski@univ-fcomte.fr}).}
\and Roberta Tittarelli\thanks{
SUPMICROTECH, CNRS, institut FEMTO-ST, F-25000 Besançon, France (\email{roberta.tittarelli@ens2m.fr}).}%\footnotemark[3]
}
\newcommand{\logLogSlopeTriangle}[5]
{
    \pgfplotsextra
    {
        \pgfkeysgetvalue{/pgfplots/xmin}{\xmin}
        \pgfkeysgetvalue{/pgfplots/xmax}{\xmax}
        \pgfkeysgetvalue{/pgfplots/ymin}{\ymin}
        \pgfkeysgetvalue{/pgfplots/ymax}{\ymax}

        % Calculate auxilliary quantities, in relative sense.
        \pgfmathsetmacro{\xArel}{#1}
        \pgfmathsetmacro{\yArel}{#3}
        \pgfmathsetmacro{\xBrel}{#1-#2}
        \pgfmathsetmacro{\yBrel}{\yArel}
        \pgfmathsetmacro{\xCrel}{\xArel}
        %\pgfmathsetmacro{\yCrel}{ln(\yC/exp(\ymin))/ln(exp(\ymax)/exp(\ymin))} 

        \pgfmathsetmacro{\lnxB}{\xmin*(1-(#1-#2))+\xmax*(#1-#2)} % in [xmin,xmax].
        \pgfmathsetmacro{\lnxA}{\xmin*(1-#1)+\xmax*#1} % in [xmin,xmax].
        \pgfmathsetmacro{\lnyA}{\ymin*(1-#3)+\ymax*#3} % in [ymin,ymax].
        \pgfmathsetmacro{\lnyC}{\lnyA+#4*(\lnxA-\lnxB)}
        \pgfmathsetmacro{\yCrel}{\lnyC-\ymin)/(\ymax-\ymin)}

        % Define coordinates for \draw. MIND THE 'rel axis cs' as opposed to the 'axis cs'.
        \coordinate (A) at (rel axis cs:\xArel,\yArel);
        \coordinate (B) at (rel axis cs:\xBrel,\yBrel);
        \coordinate (C) at (rel axis cs:\xCrel,\yCrel);

        % Draw slope triangle.
        \draw[#5]   (A)-- node[pos=0.5,anchor=north] {\scriptsize{1}}
                    (B)-- 
                    (C)-- node[pos=0.,anchor=west] {\scriptsize{#4}} %% node[pos=0.5,anchor=west] {#4}
                    cycle;
    }
}
\newcommand{\logLogSlopeTriangleBIS}[5]
{
    \pgfplotsextra
    {
        \pgfkeysgetvalue{/pgfplots/xmin}{\xmin}
        \pgfkeysgetvalue{/pgfplots/xmax}{\xmax}
        \pgfkeysgetvalue{/pgfplots/ymin}{\ymin}
        \pgfkeysgetvalue{/pgfplots/ymax}{\ymax}

        % Calculate auxilliary quantities, in relative sense.
        \pgfmathsetmacro{\xBrel}{#1}
        \pgfmathsetmacro{\yBrel}{#3}
        \pgfmathsetmacro{\xArel}{#1-#2}
        \pgfmathsetmacro{\yArel}{\yBrel}
        \pgfmathsetmacro{\xCrel}{\xArel}
        %\pgfmathsetmacro{\yCrel}{ln(\yC/exp(\ymin))/ln(exp(\ymax)/exp(\ymin))} 

        \pgfmathsetmacro{\lnxB}{\xmin*(1-#1)+\xmax*#1} % in [xmin,xmax].
        \pgfmathsetmacro{\lnxA}{\xmin*(1-(#1-#2))+\xmax*(#1-#2)} % in [xmin,xmax].
        \pgfmathsetmacro{\lnyB}{\ymin*(1-#3)+\ymax*#3} % in [ymin,ymax].
        \pgfmathsetmacro{\lnyC}{\lnyB+#4*(\lnxA-\lnxB)}
        \pgfmathsetmacro{\yCrel}{\lnyC-\ymin)/(\ymax-\ymin)}

        % Define coordinates for \draw. MIND THE 'rel axis cs' as opposed to the 'axis cs'.
        \coordinate (A) at (rel axis cs:\xArel,\yArel);
        \coordinate (B) at (rel axis cs:\xBrel,\yBrel);
        \coordinate (C) at (rel axis cs:\xCrel,\yCrel);
        \pgfmathsetmacro{\absoluteSlope}{-1*(#4)}

        % Draw slope triangle.
        \draw[#5]   (B)-- 
                    (C)-- node[pos=0.5,anchor=east] {\scriptsize{\absoluteSlope}} %% node[pos=0.5,anchor=west] {#4}
                    (A)-- node[pos=0.5,anchor=north] {\scriptsize{1}}
                    cycle;
    }
}
\newcommand{\Omegap}{\Omega^{\prime}}
\newcommand{\Omegat}{\widetilde{\Omega}}
\newcommand{\GammaIn}{\Gamma_{\mathrm{in}}}
\newcommand{\GammaOut}{\Gamma_{\mathrm{out}}}
\newcommand{\GammaWall}{\Gamma_{\mathrm{wall}}}
\newcommand{\GammaWallt}{\widetilde{\Gamma}_{\mathrm{wall}}}
\newcommand{\R}{\mathbb{R}} 
\newcommand{\Po}[1]{\mathbb{P}_{#1}}
\newcommand{\LAPL}{\mathop{}\!\mathbin\bigtriangleup} % Laplace  %{\triangle}
\newcommand{\DIV}{\mathrm{div}} % Laplace  %{\triangle}
\newcommand{\Th}{\mathcal{T}_{h}}
\newcommand{\Nh}{\mathcal{V}_{h}}
\newcommand{\NhGamma}{\mathcal{V}_{h}^{\Gamma}}
\newcommand{\NhWGamma}{\Nh\backslash\NhGamma}
\newcommand{\n}{\mathrm{a}}
\newcommand{\SigmaN}{\mathbf{\Sigma}_{h}^{\n}}
\newcommand{\SigmaGammah}{\mathbf{\Sigma}_{h}^{\Gamma}}
\newcommand{\QN}{\mathcal{Q}_{h}^{\n}}
\newcommand{\QGamma}{\mathcal{Q}_{h}^{\Gamma}}
\newcommand{\tauh}{\mathbf{\tau}_{h}}
\newcommand{\K}{T}
\newcommand{\uAv}{u_{\mathrm{av}}}  
\newcommand{\uOut}{u_\mathrm{out}}
\newcommand{\tu}{\tilde{u}} 
\newcommand{\up}{u^{\prime}} 
\newcommand{\uht}{\tilde{u}_h} 
\newcommand{\uhc}{u_{h}^{c}}
\newcommand{\uc}{u^{c}} 
\newcommand{\sigmahc}{\mathbf{\sigma}_{h}^{c}} 
\newcommand{\sigmaGammah}{\mathbf{\sigma}_{h}^{\Gamma}} 
\newcommand{\sigmaGammaCont}{\mathbf{\sigma}^{\Gamma}} 
\newcommand{\sigmaGammaSD}{\mathbf{\hat{\sigma}}^{\Gamma}} 
\newcommand{\sigmaN}{\mathbf{\sigma}_{h}^{\n}} 
\newcommand{\sigmaP}{\mathbf{\sigma^{'}}}
\newcommand{\sigmat}{\mathbf{\tilde{\sigma}}_{h}}
\newcommand{\err}{e}  
\newcommand{\bn}{\mathbf{n}}
\newcommand{\norm}[2][]{\left\|#2\right\|_{#1}} 
\newcommand{\tmop}[1]{\ensuremath{\operatorname{#1}}}
\Crefname{ALC@unique}{Line}{Lines} % <- Preamble
\begin{document}
\maketitle

% REQUIRED
\begin{abstract}
This work is motivated by the need of efficient numerical simulations of gas flows in the serpentine channels used in proton-exchange membrane fuel cells. In particular, we consider the Poisson problem in a 2D domain composed of several long straight rectangular sections and of several bends corners. In order to speed up the resolution, we propose a 0D model in the rectangular parts of the channel  and a Finite Element resolution in the bends. To find a good compromise between precision and time consuming, the challenge is double: how to choose a suitable position of the interface between the 0D and the 2D models and how to control the discretization error in the bends. We shall present an \textit{a posteriori} error estimator based on an equilibrated flux reconstruction in the subdomains where the Finite Element method is applied. The estimates give a global
upper bound on the error measured in the energy norm of the difference between the
exact and approximate solutions on the whole domain. They are guaranteed, meaning that they feature no undetermined constants. (global) Lower bounds for the error are also
derived. An adaptive algorithm is proposed to use smartly the estimator for aforementioned double challenge. A numerical validation of the estimator and the algorithm completes the work. \end{abstract}

% REQUIRED
\begin{keywords}
\textit{A posteriori} error estimate, mixed dimensional coupling, adaptive algorithm, Finite Element Method, FreeFEM.
\end{keywords}

% REQUIRED
\begin{MSCcodes}
65N15 %: Error bounds 
; 65N30
; 65N50 %: Mesh generation and refinement
\end{MSCcodes}

\section{Introduction}

The present work is motivated by models of the serpentine cathode-anode flow channels in Proton Exchange Membrane Fuel Cells (PEMFC), cf. \cite{karvonen2006modeling}. 
One of the computationally intensive tasks in this modeling is to solve the steady-state incompressible Navier-Stokes equations describing the gas flow in the long channels, characterised by very stretched rectangular regions linked by relatively small bends, cf. \cref{fig:RealThing}. 
%Under the assumptions that the fluid is incompressible and Newtonian and that the flow is laminar through the channel, a suitable boundary condition for the Laplace model consists 
Typically, one prescribes the Poiseuille flow as boundary conditions on the inlet and outlet boundaries of the channel, and no-slip boundary condition on the wall, cf. for example \cite{wen2010numerical}. 
In order to speed up the computations, the idea is to develop a coupled model as follows (here, in the 2D setting): 
in the rectangular regions of the domain, the flow is approximated by simple analytical solutions, namely the Poiseuille flow which is very accurate sufficiently far from the bends (we call this  0D model), while keeping the original governing equations (the 2D model) in the bend regions. We refer to this as the 0D/2D model. 
There exists different ways to derive a coupled model. 
In \cite{quarteroni2004mathematical,formaggia1999multiscale,formaggia2001coupling},  0D/3D coupling is obtained for the time dependent Navier-Stokes system by integrating the governing equations on a section and substituting an appropriate closure approximation. 
%They fix a longitudinal section $\mathcal{S}=[0,L_0] \times [-R,R]$ of a cylindric domain and consider a portion $\mathcal{P}_{dx}$ of this section given by $\mathcal{P}_{dx} = \{ (x,y) \in \mathcal{S} ; |y| \leq R,  x \in [x^*-\frac{dx}{2},x^*+\frac{dx}{2}] \}$, where $x^* \in (0,L_0)$ and $dx$ small enough to have $x^*-\frac{dx}{2}>0$ and $x^*+\frac{dx}{2}<L_0$, 
%then they integrate equations over $\mathcal{P}_{dx}$ and take the limit of $dx$ to zero to get the 1D model. 
In \cite{gerbeau:inria-00072549} and in \cite{miglio2005model} an asymptotic analysis is used to get respectively the 1D/3D and 1D/2D coupled simplified models for time dependent Navier-Stokes equations. 

\begin{figure}
	\centering
	\includegraphics[scale=0.5]{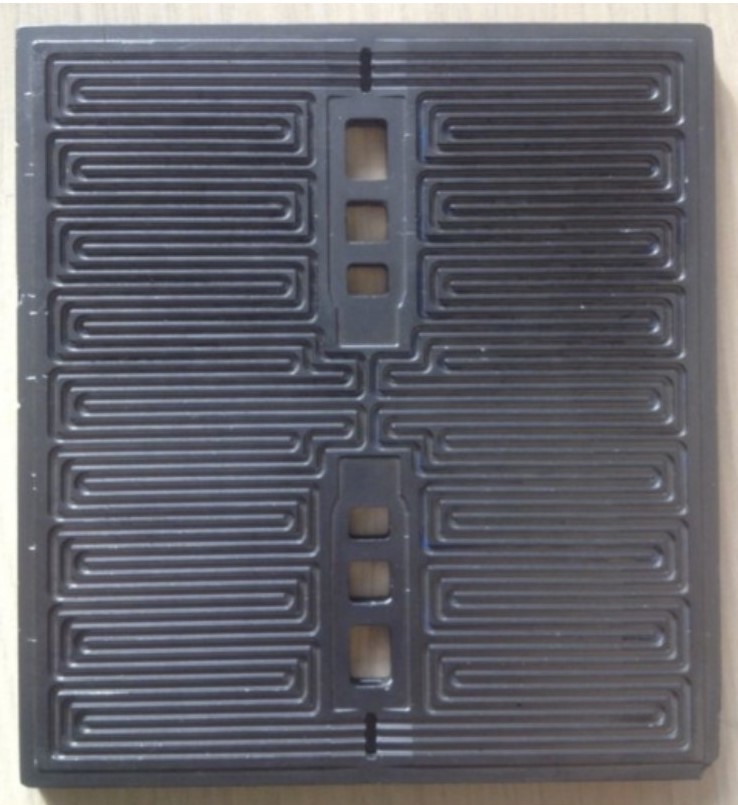}
	\caption{Gaz channels in a flow plate of a fuel cell \cite{zhou2017tridiagonal}.}\label{fig:RealThing}
\end{figure}

In this paper, we consider a simple toy model, replacing the steady Navier-Stokes system by the Poisson equation, to test the basic ideas of the 0D/2D model coupling. 
A 0D/2D coupled model for Poisson equation is derived in \cite{tayachi2014design} and \cite{panasenko1998method} by the asymptotic analysis. In the stretched rectangular portion of the domain $\Omega$, which we call in this paper $\Omegap$ and refer to as the 0D domain, cf. Fig.~\ref{fig:coupledDomain}, the solution is approximated by a simple explicit expression (the analogue of the Poiseuille velocity). In the remaining part of the domain, denoted by $\Omegat=\Omega\setminus\overline{\Omegap}$ and referred to as the 2D domain, we approximate the solution of the original Poisson equation (the 2D model) using the Finite Element (FE) method. 
The 0D and 2D domains are  separated by an interface $\Gamma$. 
%In the asymptotic analysis authors in \cite{gerbeau:inria-00072549,tayachi2014design, panasenko1998method} put a Navier boundary condition and a Robin boundary condition to be able to make simplification of the model since they use the information of $\frac{\partial u}{\partial x}$ and $\frac{\partial u}{\partial y}$  on the wall to get the simplified model where $u$ is the velocity. 
%The main difference in our study is introducing a no slip boundary condition on the wall instead of Navier condition so, we do not have any information about $\frac{\partial u}{\partial x}$ and $\frac{\partial u}{\partial y}$ on the wall, then we make the simplification of the model using asymptotic approach as \cite{panasenko1998method,tayachi2014design}. 
%Let us call $\Omegap$ the domain where the 0D model is established and $\Omegat$ the domain of the 2D model. We separate the simplified 0D/2D model by an an interface $\Gamma$. 
We use a simple coupling condition  on the interface $\Gamma$: we impose the continuity of the coupled solution on $\Gamma$. 
%A directly computation is done for the 0D model and a numerical resolution, with the finite element method, is performed for the 2D model. 
The error between the ``original 2D model'' and the ``0D/2D coupled discretized model'' comes thus from two sources: the FE discretization in $\Omegat$ and the coupling error provoked by the simple approximation in $\Omegap$ and controlled by the position of the interface $\Gamma$. 

In the present paper, we propose an \textit{a posteriori} error estimator for the 0D/2D coupled model which can be interpreted as a sum of two contributions: the first one measuring the error due to the simplification introduced by the 0D domain $\Omegap$ and the second one measuring  the discretization error in the 2D domain $\Omegat$. Equilibrating the two contributions, under a given tolerance, enables us to determine the position of the interface $\Gamma$, reducing the size of the 2D domain $\Omegat$ to be discretized, and to construct an optimized mesh on $\Omegat$.
%The first goal in this paper is to make a simplification of the Laplace equation using the asymptotic analysis as in \cite{gerbeau:inria-00072549,tayachi2014design,panasenko1998method}, with the difference that here we provide no slip boundary condition on the walls instead of Navier boundary condition, as shown in \cref{sec:approximation}. 
%The second goal is to derive an \textit{a posteriori} error estimator that depends on the flux reconstruction, like the ones introduced by \cite{ern2015polynomial,dolejsi2016hp,ern2017guaranteed,papevz2018estimating,ciarlet2018localization}. 
Our error estimator is based on the flux reconstruction technique as in \cite{ern2015polynomial, ern:hal-01377007}. The originality of our work consists in a new flux reconstruction $\sigma_h$, which is defined on the whole domain $\Omega=\Omegap\cup\Gamma\cup\Omegat$, i.e. both on the discretized (2D) and the non-discretized (0D) regions. %in order to introduce a contribution of the estimator which inform us about the suitable position of the interface between the simplified 0D/2D model and the original non-simplified 2D model in the channel. 

The article is organized as follows. In the next section, we introduce the governing equations, the geometry of the domain, and advocate for coupled simplified model. 
Our \textit{a posteriori} error is presented in \cref{sec:estimator}.   In \cref{sec:bounds}, we prove the global upper and lower bounds for the error with respect to the estimator, called respectively the global reliability and efficiency of the estimator. The reliability is guaranteed, i.e. the upper bound does not contain any unknown constants. Its proof is quite straightforward. The main technical hurdle is in the proof  of the global efficiency of the estimator. %, that is done introducing also two technical lemmas. See \cref{sec:bounds} for all the details for the proofs.  
Finally, we propose in \cref{sec:numerical} an algorithm that uses our estimator both to choose the interface position and to make the local mesh refinement. The section is concluded by numerical tests. In \cref{sec:conclusions} we provide conclusions and forthcoming works. 

\section{Original problem and approximated coupled problem}\label{sec:approximation}
Let $\Omega\subset\R^2$ be the polygonal bounded domain, as shown in \cref{fig:coupledDomain}, resembling  a portion of the  channel in a real fuel cell from \cref{fig:RealThing}. Domain $\Omega$ is splitted into two parts $\Omegap$ and $\Omegat$, separated by the interface $\Gamma$, which is placed at coordinate $x_\Gamma\in (0,L)$, with $L>0$ representing the length of the straight part of the channel. Thus, $\Omegap=(0,x_{\Gamma}) \times (0,R)$, with $R>0$ standing for the width of the channel. 
The boundary $\partial \Omega$ of $\Omega$ is partitioned in the inlet $\GammaIn$ and outlet $\GammaOut$ parts, where we impose a Poiseuille-like profile, and in the remaining part $\GammaWall$, where we impose the homogeneous Dirichlet conditions.
We consider the problem with the scalar unknown $u$ on $\Omega$ such that 
\begin{subequations}\label{laplace}
\begin{align}
  \label{eq:laplace}  -\LAPL u &= f \text{ in } \Omega \,,\\
  \label{eq:laplaceBC}  u &= g \text{ on } \partial\Omega\,,
\end{align}
\end{subequations}
where 
$f = \frac{12 \uAv}{R^2}$, and 
\begin{equation}\label{eq:bc}
g = \left\lbrace
	\begin{array}{ll}
		 S(y)  & \text{ on } \GammaIn \,,\\
		 0              & \text{ on } \GammaWall \,,\\
		 S(y+W+R) & \text{ on } \GammaOut\,,\\
	\end{array}
\right.
\end{equation}
where $W>0$ stands for the straight part of the bend, cf. \cref{fig:coupledDomain}, and 
$$
S(y) =\frac{6 \uAv}{ R^2 } y (R-y)\,,
$$
% and $\uOut$ the same given profile as $-\uIn$ but on $\GammaOut$. 
with $\uAv$ the average value of the solution on the inlet/outlet. 
The weak formulation is then given by: find $u \in H_{g}^{1}(\Omega):=\{v\in H^1(\Omega):u=g\text{ on }\partial\Omega\}$ such that 
\begin{equation}\label{eq:laplaceWeak}
    \int_\Omega \nabla u \cdot \nabla v = \int_\Omega f v \quad\text{ for all } v \in H_{0}^{1}(\Omega)\,.
\end{equation}
%Let us derive the 0D/2D model.  

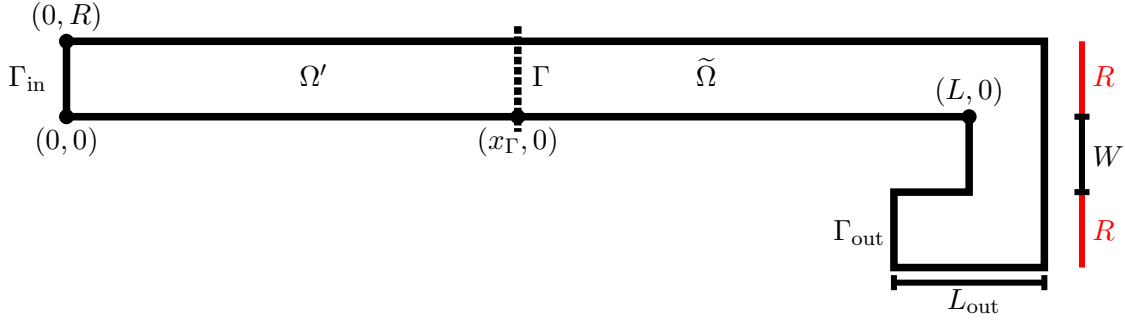
\begin{figure}[!htbp]
    \centering
    \begin{tikzpicture}
    \draw  [line width=1mm,   black]
    (-6,1) -- (6,1) -- (6,0) -- (5,0) -- (5,-1)-- (7,-1)-- (7,2)-- (-6,2)-- cycle;
    \draw  [line width=1mm,   black,densely dotted]
    (0,2.2) -- (0,0.8);
    \draw  [line width=0.8mm,   black]
    (5,-1.2) -- (7,-1.2);
    \draw  [line width=0.8mm,   black]
    (5,-1.3) -- (5,-1.1);
     \draw  [line width=0.8mm,   black]
    (7,-1.3) -- (7,-1.1);
    \draw  [line width=0.8mm,   black]
    (7.5,1) -- (7.5,0);
    \draw  [line width=0.8mm,   red]
    (7.5,2) -- (7.5,1);
    \draw  [line width=0.8mm,   red]
    (7.5,0) -- (7.5,-1);
    \draw  [line width=0.8mm,   black]
    (7.4,1) -- (7.6,1);
     \draw  [line width=0.8mm,   black]
    (7.4,0) -- (7.6,0);
    \tkzDefPoint(-6,1){A}
    \tkzLabelPoint[left,below](A){$(0,0)$}
    \node at (A)[circle,fill,inner sep=2pt]{};
    \tkzDefPoint(-6,2){B}
    \tkzLabelPoint[left,above](B){$(0,R)$}
    \node at (B)[circle,fill,inner sep=2pt]{};
    \tkzDefPoint(0,1){C}
    \tkzLabelPoint[below](C){$(x_\Gamma,0)$}
    \node at (C)[circle,fill,inner sep=2pt]{};
    \tkzDefPoint(6,1){D}
    \tkzLabelPoint[above](D){$(L,0)$}
    \node at (D)[circle,fill,inner sep=2pt]{};
\end{tikzpicture}
    \put(-300,78){$\Omegap$}
    \put(-150,78){$\Omegat$}
    \put(-212,78){$\Gamma$}
    %\put(-206,45){$x_\Gamma$}
    \put(-410,78){$\GammaIn$}
    %\put(-380,44){$(0,0)$}
    %\put(-380,93){$(0,R)$}
    \put(-98,20){$\GammaOut$}
    %\put(-45,65){$(L,0)$}
    %\put(-49,20){$(L,-W)$}
    \put(0,48){$W$}
    \put(-55,-6){$L_{\rm out}$}
    \put(0,78){\textcolor{red}{$R$}}
    \put(0,20){\textcolor{red}{$R$}}
    \caption{Partition of the domain $\Omega$ in domains for the coupled 0D/2D model. $x_\Gamma$ is the $x$-coordinate of the interface position, $L, R, W, L_{\rm out} >0$. % and it belongs to the range $[0,L-R]$.
    }
    \label{fig:coupledDomain}
\end{figure}
\begin{figure}[!htbp]
	\centering
	\includegraphics[scale=0.2]{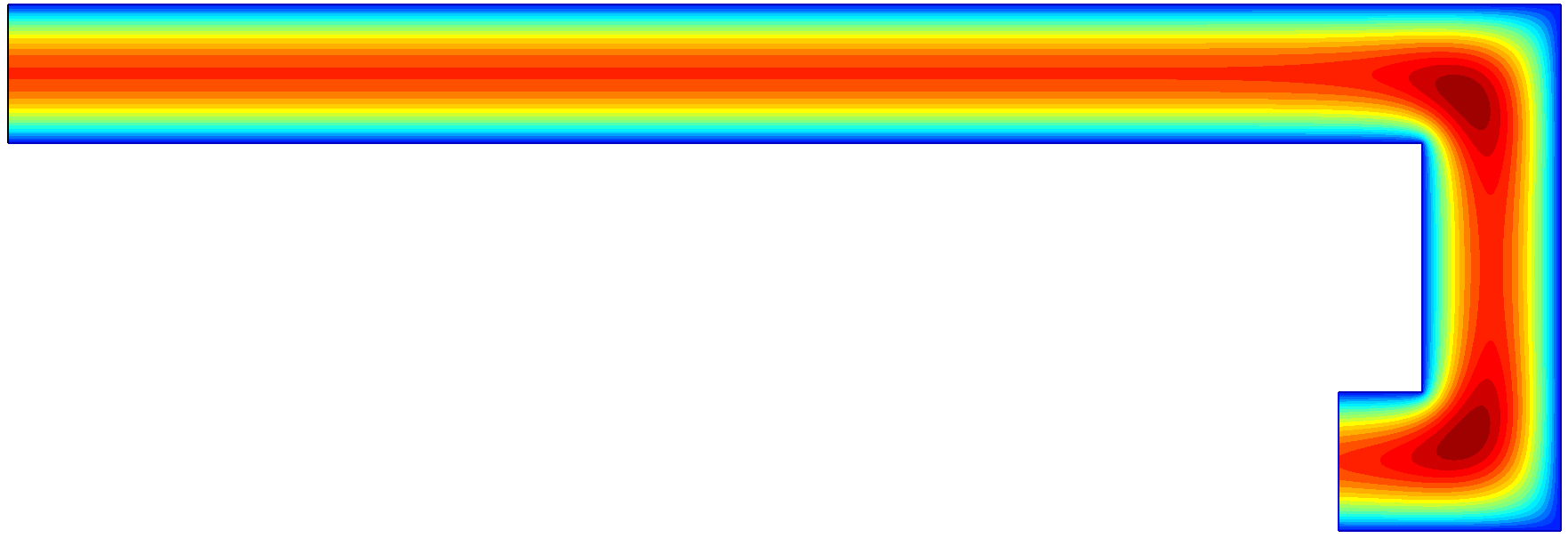}
	\includegraphics[scale=0.5]{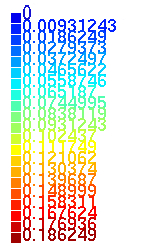}
	\caption{Solution to problem \eqref{laplace} approximated by $\Po{2}$-FE on a mesh of size approximately $h = 0.005$, used as reference solution for numerical tests in \cref{sec:numValidation}. 
	}
	\label{fig:solution}
\end{figure}

A typical solution to this problem is shown at \cref{fig:solution}. We observe that it is indeed of simple form, independent of $x$, in the rectangular portion of the channel, to the left of the bend. It is where we place the subdomain $\Omegap$. Let us now derive a simplified model in $\Omegap$ using techniques as in \cite{panasenko1998method, tayachi2014design}. %, let us derive the 0D model on $\Omegap$. 
Let us choose as characteristic constants $R$, $L$, $\uAv$ for $x$, $y$, $u$, and consider the dimensionless quantities: 
$\bar{u}=\frac{u}{\uAv}, \bar{x}=\frac{x}{L}, \bar{y}=\frac{y}{R}$. Assuming $R \ll L $, let us also define $\epsilon=\frac{R}{L}$.
Then, rewriting \eqref{eq:laplace} on $\Omegap$ in dimensionless variables, we get 
\begin{equation}\label{eq:laplaceAdim}
 -\epsilon^2\partial^2_{\bar{x}\bar{x}}\bar{u} - \partial^2_{\bar{y}\bar{y}}\bar{u}  =\bar{f} \quad \text{for} \quad  \bar{y} \in (0,1) \,,
\end{equation}
with $\bar{f}=\frac{R^2f}{\uAv} =12$. 
Neglecting the term multiplied by $\epsilon^2 \ll 1$ and recovering the variables with dimensions, we get the approximate equation on $\Omegap$ whose solution is denoted  by $u'$: 
\begin{equation}\label{eq:laplaceModel0D}
-\partial^2_{yy}u'  = 12\frac{u_{av}}{R^2}  \quad \text{in} \quad \Omegap \,.
\end{equation}
Combining this with the boundary conditions on $\GammaWall$, i.e. $u'=0$ for $y=0$ and $y=R$, leads to the approximation  
\begin{equation}\label{eq:up}
    \up=S(y) \text{ in } \Omegap\,. 
\end{equation}
This approximation is evidently compatible with the inlet boundary condition on $\GammaIn$.

In $\Omegat$, we shall solve numerically \eqref{laplace} by the FE method.  
Let $\Th$ be a triangular mesh on $\Omegat$ and $V_h$ the FE space of piecewise $\Po{k}$ polynomials with $k\geq 2$, i.e. $V_h=\{v_h\in H^1(\Omegat):{v_h}_{|_K}\in\Po{k}\,\ \forall K\in\Th\}$. The simplest coupling condition between the approximation $u'$ on $\Omegap$ and  the numerical approximation $\uht$ on $\Omegat$ is $\uht = \up$ on $\Gamma$. We thus search $\uht \in V_h$ such that %$\uht = \up = S(y)$ on $\Gamma$, 
$\uht=g$ on $\partial\Omegat$, and
\begin{equation}\label{eq:laplaceOmegatWeak}
   \int_{\Omegat} \nabla \uht \cdot \nabla v_h = \int_{\Omegat} f v_h, \quad \forall v_h \in V_h, v_h = 0 \text{ on } \partial \Omegat\,.
\end{equation}

Finally, the coupled approximated solution $\uhc$ is defined as 
\begin{equation}\label{eq:uhc}
\uhc = \left\{ \begin{array}{l}
     \up \text{ in } \Omegap\,,\\
     \uht \text{ in } \Omegat\,.
   \end{array} \right. 
\end{equation}

\section{\textit{A posteriori} error estimator}
\label{sec:estimator}

In this section, we introduce an \textit{a posteriori} error estimator with guaranteed reliability and provable efficiency for the error defined by 
\begin{equation}\label{eq:err}
    \err = \norm[\Omega]{  \nabla u - \nabla \uhc}\,
\end{equation} 
in the spirit of \cite{ern2015polynomial}. 
To this end, we should define a reconstructed flux $\sigmahc \in  H(\mathrm{div} , \Omega)$ such that $\nabla\cdot \sigmahc = f$ in $\Omega$ and such that its construction be local. This is done below, cf. \eqref{eq:sigmaCoupled}, as the sum of local contributions from the patches around the mesh nodes and a patch attached to the interface $\Gamma$.  Then, the error estimator will be defined as 
\begin{equation}\label{eq:eta}
\eta = \norm{ \sigmahc + \nabla \uhc } \,.
\end{equation}

%In order to localize the construction of $\sigmahc$, let us define piecewise $\Po{1}$ functions on the mesh. 
Let $\Nh$ and $\NhGamma$ be the sets respectively of all the nodes of the mesh $\Th$ of $\Omegat$ and of all the nodes lying on the interface $\Gamma$. 
Let $\psi^{\Gamma}$ be the continuous function on $\Omega$ defined on rectangular portion $[0,x_{\Gamma}+R]\times [0,R]$ of $\Omega$, supposing that $x_{\Gamma}+R\leq L$, by 
$$
\psi^{\Gamma} (x,y) = 
    \left\{ \begin{array}{ll}
        1, \quad &\text{for } x < x_{\Gamma} \,, 0 \leq y \leq R \,,\\
        \frac{x_{\Gamma} + R - x}{R}, \quad &\text{for }  x \in [x_{\Gamma}, x_{\Gamma} + R] \,, 0 \leq y \leq R \,,\\
        0, \quad &\text{for }  x > x_{\Gamma} + R\,, 0 \leq y \leq R \,,
   \end{array} \right.
$$
and extended by 0 everywhere else. 
We also introduce a piecewise affine on mesh $\Th$ version of $\psi^{\Gamma}$, defining 
\begin{displaymath}
\psi^{\Gamma}_h (x,y) = 
    \left\{ \begin{array}{ll}
    \sum_{\n \in \Nh } \psi^{\Gamma} (\n) \psi^\n  (x,y) \ &\text{ if } (x,y) \in \Omegat \cup \Gamma \,,\\
    1 &\text{ if } (x,y) \in \Omegap\,,
   \end{array} \right.
\end{displaymath}
where $\psi^\n$ is the hat function associated to node $\n\in\Nh$, i.e. the piecewise $\Po{1}$ function taking the value 1 at $\n$ and vanishing at all the other mesh nodes. 
It follows that we have the partition of unity on $\Omega$:
\begin{equation}\label{eq:partitionUnity}
1 = \psi^{\Gamma}_h + \sum_{\n\in\NhWGamma} (1 - \psi^{\Gamma} (\n)) \psi^\n\,.
\end{equation}
\begin{remark}\label{rmk:psiGamma}
If $x_{\Gamma} + R>L$, then the function $\psi^{\Gamma}$ defined above becomes discontinuous and this prevents us from proving the optimality estimate \eqref{eq:lower} in the manner of Theorem \ref{thm:globalBounds}. However, $\psi_h^\Gamma$ remains continuous even if $x_{\Gamma} + R>L$. Partition of unity also remains valid, so that the construction of the flux can be carried out as explained below. Hence, this hypothesis $x_{\Gamma} + R \le L$ is not necessary for the implementation of our error estimator. The numerical results show that it remains robust even if this theoretical assumption is violated. 
\end{remark} 

Let us define the patches $\omega^\n = \operatorname{supp} \psi^\n$ for all the nodes $\n\in\Nh$ and the patch attached to $\Gamma$: $\omega^{\Gamma} = \operatorname{supp} (\psi_{h}^{\Gamma}) \cap \Omegat$. Consider the three following cases to define the $\mathbf{H}(\DIV)$-conforming Raviart-Thomas piecewise polynomials spaces on patches. 
We remind that $\bn$ denotes the outward unit normal vector of patches. 
For the case where $\n$ is an internal node of $\Omegat$, that is $\n\in\Nh\cap\Omegat$, we define 
$$
\begin{array}{l}
\SigmaN = 
\{ \tauh \in \mathbf{H}(\DIV,\omega^\n) ; {\tauh}_{|\K} \in \mathbf{RT}_k(\K) \  \forall \K \in \omega^\n,  \tauh\cdot\mathbf{n}=0 \text{ on } \partial\omega^\n\}\,, \\
\QN  = \{ q_h \in L^2(\omega^\n) ; {q_h}_{|\K} \in \Po{k}(\K)  \  \forall \K \in \omega^\n , \int_{\omega^\n} q_h =0 \}\,,
\end{array}
$$
where $\mathbf{RT}_k(\K)=[\Po{k}(\K)]^2+\mathbf{x}\Po{k}(\K)$ with $\K$ a mesh element.  
For the case where $\n$ is a node on the boundary of $\Omegat$ excluded the interface $\Gamma$, that is  $\n\in\NhWGamma\cap\partial\Omegat$, we define 
$$
\begin{array}{l}
\SigmaN = \{ \tauh \in \mathbf{H}(\DIV,\omega^\n) ; {\tauh}_{|\K} \in \mathbf{RT}_k(\K) \  \forall \K \in \omega^\n ,  \tauh\cdot\mathbf{n}=0 \text{ on } \partial \omega^\n \cap \Omegat \}\,, \\
\QN  = \{ q_h \in L^2(\omega^\n) ; {q_h}_{|\K} \in \Po{k}(\K)  \  \forall \K\in\omega^{\n} \}\,.
\end{array}
$$
Finally, on the patch $\omega^\Gamma$ attached to the interface $\Gamma$, we define
$$
\begin{array}{l}
\SigmaGammah = 
\{ \tauh \in \mathbf{H}(\DIV,\omega^\Gamma) ; {\tauh}_{|\K} \in \mathbf{RT}_k(\K) \  \forall \K \in \omega^\Gamma , \tauh\cdot\mathbf{n}=0 \text{ on } \Gamma\cup\widetilde{\Gamma}\,\},\\
\QGamma  = \{ q_h \in L^2(\omega^\Gamma) ; {q_h}_{|\K} \in \Po{k}(\K)  \  \forall \K \in \omega^\Gamma   \}\,,
\end{array}
$$
with $\widetilde{\Gamma}=\partial \omega^\Gamma \cap \Omegat$. 
These are the appropriate spaces to define reconstructed fluxes, cf. the following  \cref{def:sigmaN}. This definition is completely standard, cf. \cite{ern2015polynomial}, in the case of patches centered at the mesh nodes, but we extend it also to the patch attached to the interface $\Gamma$. This is a novelty of the present work. 
\begin{definition}\label{def:sigmaN}
For any mesh node $\n\in\NhWGamma$ or for $\n=\Gamma$, recalling the corresponding spaces $\SigmaN$ and $\QN$ defined above, 
%(case $\n\in\Nh\cap\Omegat$ or case $\n\in\NhWGamma\cap\partial\Omegat$), 
let $\sigmaN \in \SigmaN$ and $p_{h}^{\n} \in \QN$ be the solution to  
\begin{subequations}
\begin{align}
  \label{eq:mixa1} & \int_{\omega^\n} \sigmaN \cdot \tauh + 
                            \int_{\omega^\n}  p_{h}^{\n} \DIV \tauh = 
                            -\int_{\omega^\n} (\nabla \uht ) \psi^\n \cdot \tauh   
                            &&\forall \tauh \in \SigmaN \,,\\
  \label{eq:mixa2} & \int_{\omega^\n} q_h \DIV \sigmaN = 
                        \int_{\omega^\n} (f\psi^\n - \nabla \uht \cdot \nabla \psi^\n) q_h  
                        &&\forall q_h \in \QN\,.
\end{align}
\end{subequations}
Each $\sigmaN$ is extended by 0 on $\Omegat$ outside $\omega^\n$.
\end{definition}
\begin{remark}\label{rmk:fctTest}
Relation \eqref{eq:mixa2} is actually satisfied for all the test functions $q_h$ in the discontinuous $\Po{k}$ space, without the constraint of vanishing average, even at the internal nodes $\n\in\Nh\cap\Omegat$. Indeed, taking any internal node $\n$, we have thanks to the divergence theorem and $\sigmaN\in\SigmaN$,  
\begin{equation}\label{eq:fctTest1}
\int_{\omega^\n} \DIV \sigmaN = - \int_{\partial \omega^\n} \sigmaN\cdot \bn = 0\,.
\end{equation}
Thanks to \eqref{eq:laplaceOmegatWeak} with $v_h=\psi^\n$, we also have  
\begin{equation}\label{eq:fctTest2}
\int_{\omega^\n} ( f \psi^\n - \nabla \uht \cdot \nabla \psi^\n )\; 1 = 0\,.
\end{equation}
Hence, \eqref{eq:mixa2} is satisfied for $q_h=1$ and thus for any $q_h\in\QN+\operatorname{span}(1)$. %If $q_h$ has not null average, we can define $\bar{q}_h= q_h - \int_{\omega^\n} q_h$ and taking \eqref{eq:mixa2} with this function test $\bar{q}_h$. Therefore the conclusion follows using \eqref{eq:fctTest1} and \eqref{eq:fctTest2}. 
\end{remark}

We now define the flux $\sigmahc$ on $\Omega$ as
\begin{equation}\label{eq:sigmaCoupled}
	\sigmahc =     \left\{ \begin{array}{ll}
		\sigmaP &\text{ in } \Omegap  \,,\\
		\sigmat &\text{ in } \Omegat\,,
	\end{array} \right.
\end{equation}
where 
\begin{equation}\label{eq:sigmap} 
	\sigmaP=-\nabla \up \text{ in } \Omegap\,,
\end{equation}
and
\begin{equation}\label{eq:sigmat}
	\sigmat = \sigmaGammah + \sum_{\n\in\NhWGamma} (1 - \psi^{\Gamma} (\n)) \sigmaN \text{ in } \Omegat\,,
\end{equation}
with $\sigmaGammah$ and $\sigmaN$ from  \cref{def:sigmaN}.% and \cref{def:sigmaN} respectively. 

\begin{proposition}\label{prop:divSigma}
The flux $\sigmahc$ defined in \eqref{eq:sigmaCoupled} is such that 
$\sigmahc\in \mathbf{H}(\DIV,\Omega)$ and 
$\DIV \sigmahc =  f$ in $\Omega$, 
where $f$ is the source term of \eqref{laplace}.  
\end{proposition}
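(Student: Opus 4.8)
The plan is to check the two asserted properties separately on $\Omegap$ and on $\Omegat$, and then to verify that the normal traces match across the interface $\Gamma$. Recall that a field defined piecewise on $\Omegap$ and $\Omegat$ lies in $\mathbf{H}(\DIV,\Omega)$ precisely when its restriction to each subdomain is in $\mathbf{H}(\DIV)$ there and its normal component is continuous across $\Gamma$; continuity across the interior mesh edges of $\Omegat$ comes for free because $\sigmat$ is assembled from $\mathbf{RT}_k$ pieces, each of which is $\mathbf{H}(\DIV)$-conforming on its patch. Thus the proof splits into three tasks: (i) $\DIV\sigmaP=f$ in $\Omegap$; (ii) $\sigmat\in\mathbf{H}(\DIV,\Omegat)$ with $\DIV\sigmat=f$; and (iii) $\sigmaP\cdot\bn=\sigmat\cdot\bn$ on $\Gamma$.

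Task (i) is a one-line computation: by \eqref{eq:up} and \eqref{eq:sigmap}, $\sigmaP=-\nabla\up=(0,-S'(y))$, hence $\DIV\sigmaP=-S''(y)=\frac{12\uAv}{R^2}=f$. For task (ii), I would first promote the weak relation \eqref{eq:mixa2} to a pointwise identity on each patch. Indeed $\DIV\sigmaN$ is piecewise $\Po{k}$, while the right-hand side $f\psi^\n-\nabla\uht\cdot\nabla\psi^\n$ is also piecewise $\Po{k}$ (since $f$ is constant, $\psi^\n$ is affine, and $\nabla\uht\cdot\nabla\psi^\n$ has degree $k-1$); by \cref{rmk:fctTest}, \eqref{eq:mixa2} holds against every piecewise $\Po{k}$ function, so the two quantities coincide, $\DIV\sigmaN=f\psi^\n-\nabla\uht\cdot\nabla\psi^\n$, and likewise $\DIV\sigmaGammah=f\psi^{\Gamma}_h-\nabla\uht\cdot\nabla\psi^{\Gamma}_h$. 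Substituting these into \eqref{eq:sigmat} and using the partition of unity \eqref{eq:partitionUnity}, the coefficients multiplying $f$ sum to $1$ while those multiplying $\nabla\uht\cdot\nabla(\cdot)$ assemble the gradient of the constant function $1$, which vanishes; hence $\DIV\sigmat=f$. Finally $\sigmat\in\mathbf{H}(\DIV,\Omegat)$ holds because each $\sigmaN$ (resp. $\sigmaGammah$) is extended by zero with vanishing normal trace on the interior part of its patch boundary, as required by the defining spaces, so no spurious normal jump appears inside $\Omegat$.

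The delicate step, which I expect to be the main obstacle, is task (iii). Because $\up=S(y)$ depends on $y$ alone, $\sigmaP=(0,-S'(y))$ has zero normal component on the vertical interface, so $\sigmaP\cdot\bn=0$ on $\Gamma$, and it remains to show $\sigmat\cdot\bn=0$ there. The term $\sigmaGammah$ contributes nothing, since $\SigmaGammah$ enforces $\tauh\cdot\bn=0$ on $\Gamma$ by construction; an interior-node $\sigmaN$ whose patch meets $\Gamma$ contributes nothing either, since $\SigmaN$ enforces $\tauh\cdot\bn=0$ on all of $\partial\omega^\n$, which contains the part of $\Gamma$ touched by the patch. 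The case that genuinely requires care is that of boundary nodes $\n\in\NhWGamma\cap\partial\Omegat$ adjacent to the interface corners $(x_{\Gamma},0)$ and $(x_{\Gamma},R)$, for which the space imposes the homogeneous normal condition only on $\partial\omega^\n\cap\Omegat$ and not on $\Gamma$; here one must invoke the local mesh structure near the corners to ensure that such patches carry no normal flux along $\Gamma$ (for a triangulation in which no element having a wall node as a vertex has a full edge on $\Gamma$, such a patch meets $\Gamma$ only at a single corner vertex and contributes nothing). Combining (i)--(iii) then yields $\sigmahc\in\mathbf{H}(\DIV,\Omega)$ with $\DIV\sigmahc=f$.
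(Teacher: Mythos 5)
Your proof is correct and, for the divergence identity, follows essentially the paper's own route: the paper likewise combines \cref{rmk:fctTest}, the partition of unity \eqref{eq:partitionUnity}, and the surjectivity $\DIV \mathbf{RT}_k(\K)=\Po{k}(\K)$; the only cosmetic difference is that you localize \eqref{eq:mixa2} to a pointwise identity on each patch before summing, while the paper sums the weak identities over $\Omegat$ against broken $\Po{k}$ test functions and localizes at the very end. The substantive difference is in the interface step. The paper dismisses the normal-trace continuity across $\Gamma$ in one line (``evident by construction since $\sigmaP\cdot\bn=\sigmat\cdot\bn=0$ on $\Gamma$''), whereas you correctly observe that this is \emph{not} evident for the patches of nodes $\n\in\NhWGamma\cap\partial\Omegat$: for those nodes $\SigmaN$ only enforces $\tauh\cdot\bn=0$ on $\partial\omega^\n\cap\Omegat$, so if a wall node adjacent to a corner of $\Gamma$ belongs to a triangle having a full edge on $\Gamma$ (a common configuration in unstructured meshes near corners), the corresponding $\sigmaN$ may carry a nonzero normal flux through that edge, weighted by the strictly positive factor $1-\psi^{\Gamma}(\n)$, and nothing in the construction cancels it. Your fix---assuming the triangulation is such that no element with a full edge on $\Gamma$ has a wall node as a vertex, so that such patches meet $\Gamma$ only in a set of measure zero---repairs this; an alternative that keeps the proposition assumption-free is to strengthen the definition of $\SigmaN$ for boundary nodes by also requiring $\tauh\cdot\bn=0$ on $\partial\omega^\n\cap\Gamma$, which costs nothing since $\QN$ carries no zero-mean constraint for such nodes and the local mixed problems remain well posed. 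So your write-up not only reproduces the paper's argument but makes explicit, and patches, a corner case that the paper glosses over.
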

\begin{proof}
On $\Omegap$, we obviously have	
\begin{equation}\label{eq:divSigmaP}
    \DIV \sigmaP =  -\Delta u' = f \text{ in } \Omegap\,.
\end{equation}
In particular, $\sigmaP\in \mathbf{H}(\DIV,\Omegap)$.

On $\Omegat$, we have $\sigmaN\in \mathbf{H}(\DIV,\Omegat)$ for any $\n\in\NhWGamma$ or $\n=\Gamma$ thanks to the boundary conditions in the definition of $\SigmaN$ and to the extension by 0 outside $\omega^\n$. Moreover, thanks to \cref{rmk:fctTest}, equation \eqref{eq:mixa2} can be rewritten as
$$
 \int_{\omega^\n} q_h \DIV \sigmaN = 
\int_{\omega^\n} (f\psi^\n - \nabla \uht \cdot \nabla \psi^\n) q_h  
\quad\forall q_h \in \mathcal{Q}_h\,,
$$
where $\mathcal{Q}_h=\Po{k}(\Th)$ is the broken polynomial space of degree $k\geq 2$ on the mesh $\Th$. Thus, by \eqref{eq:sigmat}, 
\begin{align*}
\int_{\Omegat} ( \DIV \sigmat ) q_h& =     \int_{\Omegat} ( \DIV \sigmaGammah ) q_h + 
    \sum_{\n\in\NhWGamma} (1 - \psi^{\Gamma} (\n))  \int_{\Omegat} ( \DIV \sigmaN ) q_h  \\
    &=\int_{\Omegat} \Big( f ( \psi^\Gamma_h + \sum_{\n\in\NhWGamma} (1 - \psi^{\Gamma} (\n))\psi^\n  ) 
                    - \nabla \uht \cdot \nabla ( \psi^\Gamma_h + \sum_{\n\in\NhWGamma} (1 - \psi^{\Gamma} (\n))\psi^\n  )  \Big) q_h \\
    &= \int_{\Omegat}  f q_h \,,
\end{align*}
where to pass from the second to the third line, we have used the partition of unity \eqref{eq:partitionUnity}. Since $\DIV  \mathbf{RT}_k(\K) = \Po{k}(\K)$ on any mesh element $\K$, this implies 
\begin{equation}\label{eq:divSigmahc}
	\DIV \sigmat =   f \text{ in } \Omegat\,.
\end{equation}

In view of \eqref{eq:divSigmaP} and \eqref{eq:divSigmahc} and the definition (\ref{eq:sigmaCoupled}) of $\sigmahc$, it remains to prove that the normal component of $\sigmahc$ is continuous across $\Gamma$. This is evident by construction since $\sigmaP\cdot\mathbf{n}=\sigmat\cdot\mathbf{n}=0$ on $\Gamma$. 
\end{proof}

\section{Upper and lower bounds}
\label{sec:bounds}

\begin{theorem}[Global reliability and efficiency]\label{thm:globalBounds}
Let $u$ be the weak solution of \eqref{eq:laplaceWeak},  $\uhc$ be the solution \eqref{eq:uhc} of the coupled 0D/2D problem, $\err$ and $\eta$ be the error and the error estimator defined respectively by \eqref{eq:err} and \eqref{eq:eta}. Then 
\begin{equation} \label{eq:upper}
    \err \leq \eta
\end{equation}
and, assuming  $x_{\Gamma}+R\leq L$,
\begin{equation}\label{eq:lower} 
    \eta \leq C \err
\end{equation}
with a constant $C$ depending only on the mesh regularity. \end{theorem}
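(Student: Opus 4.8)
The reliability bound \eqref{eq:upper} follows from the classical Prager--Synge (hypercircle) argument, which I would treat first. Because the coupling condition $\uht=\up$ on $\Gamma$ makes $\uhc$ continuous across $\Gamma$ and $\uhc=g$ on $\partial\Omega$, the error satisfies $u-\uhc\in H^1_0(\Omega)$, so that $\err=\sup\{\int_\Omega\nabla(u-\uhc)\cdot\nabla v:\ v\in H^1_0(\Omega),\ \norm{\nabla v}=1\}$. For such a $v$, the weak formulation \eqref{eq:laplaceWeak} gives $\int_\Omega\nabla u\cdot\nabla v=\int_\Omega fv$, and since $\sigmahc\in\mathbf{H}(\DIV,\Omega)$ with $\DIV\sigmahc=f$ by \cref{prop:divSigma}, Green's identity yields $\int_\Omega fv=\int_\Omega(\DIV\sigmahc)v=-\int_\Omega\sigmahc\cdot\nabla v$, the boundary term vanishing as $v=0$ on $\partial\Omega$. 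Hence $\int_\Omega\nabla(u-\uhc)\cdot\nabla v=-\int_\Omega(\sigmahc+\nabla\uhc)\cdot\nabla v\le\eta\,\norm{\nabla v}$ by Cauchy--Schwarz, and \eqref{eq:upper} follows.

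For the efficiency bound \eqref{eq:lower}, I would first note that on $\Omegap$ the integrand of $\eta$ vanishes identically, since $\sigmahc+\nabla\uhc=-\nabla\up+\nabla\up=0$ there by \eqref{eq:sigmap}; thus $\eta=\norm[\Omegat]{\sigmat+\nabla\uht}$ and the estimator is concentrated in $\Omegat$. Writing $\nabla\uht=\psi^{\Gamma}_{h}\nabla\uht+\sum_{\n\in\NhWGamma}(1-\psi^{\Gamma}(\n))\psi^\n\nabla\uht$ via the partition of unity \eqref{eq:partitionUnity} and inserting the definition \eqref{eq:sigmat} of $\sigmat$, I decompose
\[
\sigmat+\nabla\uht=\big(\sigmaGammah+\psi^{\Gamma}_{h}\nabla\uht\big)+\sum_{\n\in\NhWGamma}(1-\psi^{\Gamma}(\n))\big(\sigmaN+\psi^\n\nabla\uht\big).
\]
The triangle inequality then bounds $\eta$ by one local contribution per nodal patch $\omega^\n$ plus the interface contribution on $\omega^\Gamma$, and it suffices to prove a local efficiency estimate for each and to sum using the finite overlap of the patches (controlled by the mesh regularity). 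Since $f$ is constant, $f\in\Po{k}$ and the reconstruction reproduces it exactly (because $\DIV\mathbf{RT}_k(\K)=\Po{k}(\K)$), so no data-oscillation remainder appears and the estimate stays of the clean form \eqref{eq:lower}. For each nodal patch the bound $\norm[\omega^\n]{\sigmaN+\psi^\n\nabla\uht}\le C\,\norm[\omega^\n]{\nabla(u-\uht)}$ is classical, following from the discrete stability of the mixed problem \eqref{eq:mixa1}--\eqref{eq:mixa2} and the duality characterisation of its minimiser, exactly as in \cite{ern2015polynomial}.

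The genuinely new and hardest step, and the one I expect to be the main obstacle, is the interface term $\norm[\omega^\Gamma]{\sigmaGammah+\psi^{\Gamma}_{h}\nabla\uht}$, where both the patch geometry and the non-standard weight $\psi^{\Gamma}_{h}$ differ from the nodal case. I would handle it in two stages. First, a discrete stability result on $\omega^\Gamma$ — the existence of a right inverse of the divergence in $\SigmaGammah$ with norm bounded only in terms of mesh regularity — reduces the discrete minimum to the corresponding continuous minimum over $\mathbf{H}(\DIV,\omega^\Gamma)$. Second, by duality this continuous minimum equals the dual norm of the local residual $v\mapsto\int_{\omega^\Gamma}\big(f\psi^{\Gamma}_{h}v-\nabla\uht\cdot\nabla(\psi^{\Gamma}_{h}v)\big)$ over test functions vanishing on the natural part of $\partial\omega^\Gamma$.

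The delicate point, and the place where the coupling enters the lower bound, is the evaluation of this residual. Testing against the exact solution and integrating by parts, it splits into an interior part controlled by $\norm[\omega^\Gamma]{\nabla(u-\uht)}$ and a boundary term on $\Gamma$ equal to $-\int_\Gamma\partial_{\bn}(u-\up)\,v$, using $\partial_{\bn}\up=0$ and $\psi^{\Gamma}_{h}=1$ on $\Gamma$. Since $-\Delta\up=f=-\Delta u$ in $\Omegap$, the difference $u-\up$ is harmonic in $\Omegap$ and vanishes on $\GammaIn$ and on the adjacent walls; integrating by parts back over a strip of $\Omegap$ adjacent to $\Gamma$ therefore turns this boundary term into $\int\nabla(u-\up)\cdot\nabla V$ for a stable lifting $V$ of $v$, bounded by $\norm[\Omegap]{\nabla(u-\up)}$. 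This is exactly where the hypothesis $x_\Gamma+R\le L$ is used: it keeps $\psi^{\Gamma}$ continuous, so that the competitor and the lifting are well defined (cf.\ \cref{rmk:psiGamma}). Collecting both contributions gives $\norm[\omega^\Gamma]{\sigmaGammah+\psi^{\Gamma}_{h}\nabla\uht}\le C\big(\norm[\omega^\Gamma]{\nabla(u-\uht)}+\norm[\Omegap]{\nabla(u-\up)}\big)$, and summing the nodal and interface estimates, using finite overlap together with $\err^2=\norm[\Omegat]{\nabla(u-\uht)}^2+\norm[\Omegap]{\nabla(u-\up)}^2$, yields \eqref{eq:lower}.
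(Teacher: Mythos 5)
Your proof of the reliability bound \eqref{eq:upper} is correct and coincides with the paper's argument. For the efficiency bound \eqref{eq:lower}, your architecture is genuinely different from the paper's, and partly in a good way: you bound each patch contribution directly by pieces of the true error, namely $\norm[\omega^\n]{\nabla(u-\uht)}$ for the nodal patches and $\norm[\omega^\Gamma]{\nabla(u-\uht)}+\norm[\Omegap]{\nabla(u-\up)}$ for the interface patch, so the final assembly only needs the trivial splitting $\err^2=\norm[\Omegap]{\nabla(u-\up)}^2+\norm[\Omegat]{\nabla(u-\uht)}^2$. The paper instead introduces an intermediate ``continuous coupled'' solution $\tu$ (with fluxes $\sigmaGammaCont$ and $\sigmaGammaSD$), proves bounds in terms of $\norm[\Omega]{\nabla(u-\uc)}$ and $\norm[\Omegat]{\nabla(\tu-\uht)}$, and then needs an orthogonality/Pythagoras identity to convert these into $\err$. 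Your handling of the boundary term $-\int_\Gamma\partial_\bn(u-\up)\,v$ via harmonicity of $u-\up$ in $\Omegap$ and a lifting is a legitimate substitute for the paper's Step 1; note, however, that the uniform-in-$R$ stability of such a lifting is precisely the content of the paper's \cref{lemma:2} (proved by an explicit Fourier computation), so it cannot be taken for free, and the hypothesis $x_\Gamma+R\le L$ is what guarantees that $\operatorname{supp}(\psi^\Gamma)\cap\Omegat$ is a square sitting inside the straight channel so that these constants are $R$-independent --- it is not really tied to a lifting into $\Omegap$, which is where you invoke it.

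The genuine gap is your ``first stage'' on the interface patch: the reduction of the discrete constrained minimization over $\SigmaGammah$ to the continuous one over $\mathbf{H}(\DIV,\omega^\Gamma)$. You assert it follows from ``the existence of a right inverse of the divergence in $\SigmaGammah$ with norm bounded only in terms of mesh regularity''. This fails on two counts. First, a bounded right inverse (a Fortin-type operator) is not sufficient: applied to the divergence datum $f\psi^\Gamma_h-\nabla\psi^\Gamma_h\cdot\nabla\uht$ it produces a feasible discrete flux whose norm is controlled by the datum itself, i.e.\ a residual-type bound scaling like the patch diameter times the $L^2$ norm of the datum, not a bound by the continuous constrained minimum; the statement ``discrete minimum $\le C\times$ continuous minimum'' is the much stronger stable-polynomial-extension theorem of Ern and Vohral\'{\i}k \cite{ern:hal-01377007}. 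Second, and decisively, that theorem cannot be invoked on $\omega^\Gamma$ as it stands, for exactly the reason the paper points out: $\SigmaGammah$ enforces $\tauh\cdot\bn=0$ on $\Gamma\cup\widetilde{\Gamma}$, while the weight $\psi^\Gamma_h$ vanishes only on $\widetilde{\Gamma}$ and equals $1$ on $\Gamma$; the known result requires the flux constraint to sit where the weight vanishes. Overcoming this mismatch is the main technical hurdle of the whole proof, and the paper resolves it by a mirror-reflection construction: it doubles $\omega^\Gamma$ across $\Gamma$, extends $\psi^\Gamma_h$, $\uht$, $\sigmaGammah$ symmetrically, applies the theorem on the doubled patch $\omega^{\Gamma,m}$ (where the constraint now sits on $\widetilde{\Gamma}\cup\widetilde{\Gamma}^m$, on which the extended weight does vanish), and recovers the condition on $\Gamma$ by symmetry of the minimizers. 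Without this step, or some alternative proof of discrete stability on the band-shaped patch with the mismatched boundary conditions, your interface estimate --- and hence \eqref{eq:lower} --- is unsupported.
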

\begin{proof}
The proof of the upper bound \eqref{eq:upper} is completely standard: 
we set  $e = u - \uhc$, observe that $e \in H^1_0 (\Omega)$, 
use the weak formulation \eqref{eq:laplaceWeak}, \cref{prop:divSigma} and integration by parts (with the fact that $e=0$ on $\partial\Omega$), to get 
\begin{subequations}
\begin{align*}
\norm[\Omega]{\nabla u - \nabla \uhc} & = 
\int_\Omega (\nabla u - \nabla \uhc)\cdot(\nabla u - \nabla \uhc) = 
\int_\Omega (\nabla u - \nabla \uhc)\cdot \nabla e = 
\int_\Omega f e - \int_\Omega  \nabla \uhc \cdot \nabla e \\
& = \int_\Omega \DIV \sigmahc e - \int_\Omega  \nabla \uhc \cdot \nabla e =
\int_\Omega (-\sigmahc -  \nabla \uhc) \cdot \nabla e 
\leq \norm[\Omega]{\sigmahc + \nabla \uhc} \norm[\Omega]{\nabla e}\,,
\end{align*}
\end{subequations}
where for the inequality we apply Cauchy-Schwartz inequality. Hence \eqref{eq:upper}. 

The proof of the lower bound \eqref{eq:lower} needs the following four steps. In this proof, we shall use the letter $C$ for various constants that depend only on the mesh regularity (in particular, independent of the geometrical parameter $R$ and of the polynomial degree $k$).

\paragraph{Step 1: error caused by the interface, prior to discretization} 
Let us begin with a ``continuous'' version of the coupled 0D/2D model: 
\begin{subequations}
\begin{align}
  \label{eq:laplaceOmegat} & -\LAPL \tu = f \text{ in } \Omegat \,,\\
  \label{eq:laplaceOmegatBC} & \tu = 
    \left\lbrace
	\begin{aligned}
		& \up   && \text{ on } \Gamma \,,\\
		& 0     && \text{ on } \GammaWall \,,\\
		& \uOut && \text{ on } \GammaOut\,,\\
	\end{aligned}
    \right.
\end{align}
\end{subequations}
and set 
$$
\uc = \left\{ \begin{array}{l}
     \up \text{ on } \Omegap \,,\\
     \tu, \text{ on } \Omegat\,.
   \end{array} \right. 
$$
We want to study $\norm[\Omega]{\nabla u - \nabla \uc }$ which represents
the error introduced by the interface alone, without discretizing the problem
in $\Omegat$. 
We also introduce the continuous version of $\sigmaGammah$:
$\sigmaGammaCont \in H_{\Gamma} (\DIV, \omega^{\Gamma})$, $p^{\Gamma} \in L^2
(\omega^{\Gamma})$ with $H_{\Gamma} (\DIV, \omega^{\Gamma}) = \{\tau \in H
(\DIV, \omega^{\Gamma}) : \tau \cdot \bn = 0 \text{ on } \Gamma\}$, such that
\begin{subequations}\label{eq:sigmaG}
	\begin{align}
		\label{eq:sigmaG1}    
		\int_{\omega^{\Gamma}} \sigmaGammaCont \cdot \tau^{\Gamma} 
		+ \int_{\omega^{\Gamma}} p^{\Gamma} \DIV \tau^{\Gamma} 
		& = -\int_{\omega^{\Gamma}} \nabla \tu \cdot \tau^{\Gamma} 
		&& \forall \tau^{\Gamma} \in H_\Gamma (\DIV, \omega^{\Gamma}) \,,\\
		\label{eq:sigmaG2} 
		\int_{\omega^{\Gamma}} q^{\Gamma} \DIV \sigmaGammaCont 
		&= \int_{\omega^{\Gamma}} f q^{\Gamma} && \forall q^{\Gamma} \in L^2(\omega^{\Gamma}) \,.
	\end{align}   
\end{subequations}

Let us prove
\begin{equation}\label{eq:LowerIntf} 
	\norm[\omega^{\Gamma}]{ \sigmaGammaCont + \nabla \tu } \leq 
	C \norm[\Omega]{ \nabla u - \nabla \uc}\,.
\end{equation}
Let $\omega_R^{\Gamma} = \operatorname{supp} (\psi^{\Gamma}) \cap \Omegat$ and
$\Gamma_R = \partial \omega_R^{\Gamma} \cap \{x = x_{\Gamma} + R\}$. In Lemma
\ref{lemma:1} we show that there exists $\theta \in H^1 (\omega_R^{\Gamma})$
such that
\begin{equation}\label{eq:thetaGamma}	
	\begin{aligned}
		\LAPL \theta & = 0 &  & \text{in } \omega^{\Gamma}_R \,,\\
		\nabla \theta \cdot \bn & = - \nabla \tilde{u} \cdot \bn &  & \text{on }
		\Gamma \,,\\
		\nabla \theta \cdot \bn & = 0 &  & \text{on } \Gamma_R \,,\\
		\theta & = 0 &  & \text{on } \GammaWallt \cap \partial \omega_R^{\Gamma}
		\,,
	\end{aligned}
\end{equation}
and $ \norm[\omega_R^{\Gamma}]{ \nabla \theta } \leq C_1 \norm[-\nicefrac12,\Gamma]{ \nabla \tu \cdot \bn }$, with $C_1$ independent of $R$. Let
\[ \tau^c = \left\{ \begin{aligned}
	& \nabla \theta &  & \text{in } \omega^{\Gamma}_R \,,\\
	& 0 &  & \text{on } \omega^{\Gamma} \setminus \omega^{\Gamma}_R
	\, .
\end{aligned} \right. \]
By construction, $\DIV\tau^c=0$ in $\omega^{\Gamma}$. This implies
\[ \DIV (\sigmaGammaCont + \nabla \tu + \tau^c) = 0 \quad
\text{in } \omega^{\Gamma} \,\]
thanks to \eqref{eq:sigmaG2}, and
\begin{equation}
	\label{eq:divFreeFctTest} (\sigmaGammaCont + \nabla \tu +
	\tau^c) \cdot \bn = 0 \quad \text{on } \Gamma
\end{equation}
since $\sigmaGammaCont \cdot \bn = 0$ and $\tau^c \cdot \bn = - \nabla \tilde{u}\cdot \bn$ by \eqref{eq:thetaGamma} on $\Gamma$. This
allows us to take $\tau^{\Gamma} = \sigmaGammaCont + \nabla \tu + \tau^c$ as a test function in \eqref{eq:sigmaG1}. Hence, by Cauchy-Schwartz,
\begin{equation}\label{eq:eff1}
    \norm[\omega^\Gamma]{\sigmaGammaCont+\nabla \tu} \leq \norm[\omega^\Gamma]{\tau^c} 
    \leq C_1 \norm[-\nicefrac12,\Gamma]{ \nabla \tu \cdot \bn }\,. 
\end{equation}

Coming back to the error, denoting by $[ \cdot ]$ the jump on $\Gamma$, we have that for all $v\in H^1_0(\Omega)$ 
$$
\int_\Omega \nabla ( u - \uc ) \cdot \nabla v = \int_\Gamma [ \nabla ( u - \uc )  \cdot \bn ] v = \int_\Gamma \nabla\tu \cdot \bn v\,,
$$
where we have split the domain $\Omega$ in $\Omegap$ and $\Omegat$, integrated by parts, using $\LAPL(u-\uc)=0$ on $\Omegap$ and $\Omegat$, $v\equiv0$ on $\partial\Omega$, and $\nabla \up \cdot \bn =0$ on $\Gamma$. 
Now, from \cref{lemma:2} we know that for each $w\in H^{\nicefrac{1}{2}}(\Gamma)$ there exists $v\in H_{0}^{1}(\Omega)$ such that 
$v_{|\Gamma} = w$ and such that $\norm[\Omega]{\nabla v} \leq C_2 \norm[\nicefrac{1}{2},\Gamma]{w}$; with $C_2$ independent of geometrical parameters. We can take $v=\theta$ in $\omega^{\Gamma}_{R}$, with $\theta$ the solution of \cref{lemma:2} and extend it to all the domain taking \textit{e.g.} $v$ the mirror of $\theta$ in $\Omegap$. 
Thus 
$$
\int_\Gamma \nabla\tu \cdot \bn v = \int_\Omega \nabla ( u - \uc ) \cdot \nabla v  \leq \norm[\Omega]{ \nabla ( u - \uc ) }\norm[\Omega]{\nabla v } \leq C_2 \norm[\nicefrac{1}{2},\Gamma]{w}  \norm[\Omega]{ \nabla ( u - \uc ) }
$$
that implies  
$$
\norm[-\nicefrac{1}{2},\Gamma]{ \nabla\tu \cdot \bn }  \leq C_2   \norm[\Omega]{ \nabla ( u - \uc ) }  \,.
$$
Coming back to \eqref{eq:eff1} and using this latter inequality, we get the desired estimate (\ref{eq:LowerIntf}) with $C= C_1 C_2$.

\paragraph{Step 2: error caused by the interface, adding the discretization in $\Omegat$}\label{proof:step2}
Here we prove that 
\begin{equation}\label{eq:LowerIntfDisc}
    \norm[\omega^\Gamma]{\sigmaGammah+(\nabla \uht) \psi^{\Gamma}_h} \leq  C  (\norm[\omega_\Gamma]{ \nabla ( \tu - \uht ) }  +\norm[\Omega]{ \nabla ( u - \uc ) }) \,.
\end{equation}
This can be viewed as a discrete analogue of (\ref{eq:LowerIntf}). 
The proof is based on Theorem 1.2 of {\cite{ern:hal-01377007}}, but the latter cannot be applied directly due to a mismatch in boundary conditions: $\sigmaGammah$ is required to vanish on both $\Gamma$ and $\widetilde{\Gamma}$, while $\psi^{\Gamma}_h$ vanishes only on $\widetilde{\Gamma}$. 
To circumvent this difficulty, we enlarge $\omega^{\Gamma}$ to $\omega^{\Gamma, m}$ by adding to $\omega^{\Gamma}$ its
mirror image with respect to $\Gamma$. 
Similarly, we extend $\psi^{\Gamma}_h, \uht, \sigmaGammah$ from $\omega^{\Gamma}$ to $\psi^{\Gamma, m}_h, \uht^m, \sigma_h^{\Gamma, m}$ \ on $\omega^{\Gamma, m}$ as functions symmetric with respect to $\Gamma$. 
Let $\widetilde{\Gamma}^m$ denote the mirror image of $\widetilde{\Gamma}$. 
Theorem 1.2 of {\cite{ern:hal-01377007}} can be formulated
on $\omega^{\Gamma, m}$ as
\begin{equation}\label{th1.2}
	\hspace{-8mm}
	\min_{\scriptsize{\begin{array}{c}
				\mathbf{v}_h \in H (\tmop{div}, \omega^{\Gamma, m}) \cap \tmop{RT}_k\\
				\tmop{div} \mathbf{v}_h = \psi^{\Gamma, m}_h f - \nabla \psi^{\Gamma,
					m}_h \cdot \nabla \uht^m\\
				\mathbf{v}_h = 0 \tmop{on} \tilde{\Gamma} \cup \widetilde{\Gamma }^m
	\end{array}}} \hspace{-3mm}\left\| \mathbf{v}_h + \psi^{\Gamma, m}_h \nabla \uht^m
	\right\|_{\omega^{\Gamma, m}} \leqslant C (\omega^{\Gamma, m}, \psi^{\Gamma,
		m}_h) \min_{\scriptsize{\begin{array}{c}
				\mathbf{v} \in H (\tmop{div}, \omega^{\Gamma, m})\\
				\tmop{div} \mathbf{v}  = f
	\end{array}}} \left\| \mathbf{v}  + \nabla \uht^m \right\|_{\omega^{\Gamma,m}}
\end{equation}
where $C (\omega^{\Gamma, m}, \psi^{\Gamma, m}_h) \le C (\| \psi^{\Gamma, m}_h
\|_{\infty} + C_P \| \nabla \psi^{\Gamma, m}_h \|_{\infty})$ with $C_P$ the
Poincaré constant of the space $H^1 (\omega^{\Gamma, m})$ under the
constraint of functions vanishing on $\widetilde{\Gamma} \cup \widetilde{\Gamma}^m$,
i.e. $\|v\|_{\omega^{\Gamma, m}} \le C_P  \| \nabla v\|_{\omega^{\Gamma, m}}$
for all $v \in H^1 (\omega^{\Gamma, m})$ with $v = 0$ on $\widetilde{\Gamma} \cup
\widetilde{\Gamma}^m$ (note that these constraints are imposed on the part of
$\partial \omega^{\Gamma, m}$ where $\psi^{\Gamma, m}_h$ vanishes). By our
geometrical assumptions, $C_P$ is of order $R$ and $\| \nabla \psi_h^{\Gamma}
\|_{\infty} \le \frac{C}{R}$ so that $C (\omega^{\Gamma, m}, \psi^{\Gamma,
	m}_h) \le C$. Comparing with the definition of $\sigma_h^{\Gamma}$ we see that
the minimum on the left-hand side of (\ref{th1.2}) is attained on
$\sigma_h^{\Gamma, m}$ (note in particular that $\sigma_h^{\Gamma, m} \cdot n
= 0$ on $\Gamma$ by symmetry). In order to identify the minimum on the other
side, we introduce $\sigmaGammaSD \in H_{\Gamma} (\DIV, \omega^{\Gamma})$,
$\hat{p}^{\Gamma} \in L^2 (\omega^{\Gamma})$ such that
\begin{subequations}\label{sigmaHat}		
		\begin{align}
			\int_{\omega^{\Gamma}} \sigmaGammaSD \cdot \tau^{\Gamma} +
			\int_{\omega^{\Gamma}} \hat{p}^{\Gamma} \DIV \tau^{\Gamma} & = -
			\int_{\omega^{\Gamma}} \nabla \uht \cdot \tau^{\Gamma} &  & \forall
			\tau^{\Gamma} \in H_{\Gamma} (\DIV, \omega^{\Gamma}) \,, \\
			\int_{\omega^{\Gamma}} q^{\Gamma} \DIV \sigmaGammaSD & =
			\int_{\omega^{\Gamma}} fq^{\Gamma} &  & \forall q^{\Gamma} \in L^2
			(\omega^{\Gamma}) \, . 
	\end{align}
\end{subequations}
We see then that the minium on the right-hand side of (\ref{th1.2}) is
attained on $\hat{\sigma}^{\Gamma, m}$, which is the mirror extension of
$\sigmaGammaSD$ to $\omega^{\Gamma, m}$. Going back \ in (\ref{th1.2}) to the
subdomain $\omega^{\Gamma}$ of $\omega^{\Gamma, m}$ and using the symmetry
gives
\[ \left\| \sigmaGammah + (\nabla \uht) \psi^{\Gamma}_h
\right\|_{\omega^{\Gamma}} \leqslant C \left\| \sigmaGammaSD + \nabla \uht
\right\|_{\omega^{\Gamma}}\,. \]
This entails by the triangle inequality
\begin{equation}
	\label{sigmaG:3terms} \left\| \sigmaGammah + (\nabla \uht) \psi^{\Gamma}_h
	\right\|_{\omega^{\Gamma}} \le C (\left\| \sigmaGammaSD - \sigma^{\Gamma}
	\right\|_{\omega^{\Gamma}} + \left\| \sigma^\Gamma + \nabla \tilde{u}
	\right\|_{\omega^{\Gamma}} + \left\| - \nabla \tilde{u} + \nabla \uht
	\right\|_{\omega^{\Gamma}}) .
\end{equation}
To bound the first term in \ (\ref{sigmaG:3terms}), we take the difference
between \eqref{eq:sigmaG} and \eqref{sigmaHat} setting $\tau^{\Gamma} =
\sigmaGammaCont - \sigmaGammaSD$. Noting that $\DIV\tau^c = 0$, this yields
\begin{equation}
	\label{eq:errSD} \left\| \sigmaGammaCont - \sigmaGammaSD
	\right\|_{\omega^{\Gamma}} \leq  \left\| \nabla \tilde{u} - \nabla \uht
	\right\|_{\omega^{\Gamma}} \, .
\end{equation}
The second term in (\ref{sigmaG:3terms}) is bounded by (\ref{eq:LowerIntf}).
Finally, (\ref{sigmaG:3terms}) gives \eqref{eq:LowerIntfDisc}.
\paragraph{Step 3: discretization error inside $\Omegat$} We have at all the nodes $\n\in \NhWGamma$ 
%of the mesh $\mathcal{T}_h$ 
\begin{equation}\label{eq:LowerNodes} 
	\norm[\omega^\n]{\sigmaN + (\nabla \uht) \psi^\n}\leq C \norm[\omega^\n]{ \nabla \tu - \nabla \uht} \,.
\end{equation}
This well known estimate follows, for example, from Theorem 1.2 of {\cite{ern:hal-01377007}} applied on each patch $\omega^\n$.

\paragraph{Step 4: putting everything together}
From definition of $\sigmahc$, see  \eqref{eq:sigmaCoupled}, \eqref{eq:sigmap} \eqref{eq:sigmat}, and the partition of unity (\ref{eq:partitionUnity}) we obtain on every mesh element $\K\in\Th$
$$
\norm[\K]{ \sigmat + \nabla \uht } \leq 
\norm[\K]{\sigmaGammah+(\nabla \uht) \psi^{\Gamma}_h}
+ \sum_{\n\in\NhWGamma} (1 - \psi^{\Gamma} (\n)) \norm[\K]{\sigmaN + (\nabla \uht) \psi^\n}
$$
where $\sigmaGammah$ and $\sigmaN$ are extended by 0 outside of their domains of definitions $\omega^\Gamma$ and $\omega^\n$ respectively. The number of non-zero terms in the sum above is thus uniformly bounded by a constant that depends only on the regularity of the mesh. Taking the squares on both sides of the inequality above leads to  
$$
\norm[\K]{ \sigmat + \nabla \uht }^2 \leq 
C\left(
\norm[\K]{\sigmaGammah+(\nabla \uht) \psi^{\Gamma}_h}^2
+ \sum_{\n\in\NhWGamma} \norm[\K]{\sigmaN + (\nabla \uht) \psi^\n}^2
\right)\,.
$$
Taking the sum over $\K\in\Th$, noting $\sigmaP+\nabla \up=0$ on $\Omegap$, and then using the bounds \eqref{eq:LowerIntfDisc}, \eqref{eq:LowerNodes} leads to 
\begin{align*}
\norm[\Omega]{ \sigmahc + \nabla \uhc } ^2
&\leq C \left(
    \norm[\omega^\Gamma]{\sigmaGammah+(\nabla \uht) \psi^{\Gamma}_h}^2 + 
    \sum_{\n\in\NhWGamma}  	\norm[\omega^\n]{\sigmaN + (\nabla \uht) \psi^\n}^2
    \right ) \\
    &\leq C \left( \norm[\Omega]{ \nabla ( u - \uc ) }^2
      +\norm[\omega^\Gamma]{ \nabla \tu- \nabla \uht }^2 +  
        \sum_{\n\in\NhWGamma} \norm[\omega^\n]{ \nabla \tu- \nabla \uht }^2 
         \right) \\
    &\leq C \left( \norm[\Omega]{ \nabla ( u - \uc ) }^2
    +\norm[\Omegat]{ \nabla \tu- \nabla \uht }^2  
    \right)       
\end{align*}
since the number of possible overlaps between the different patches $\omega^\Gamma$ and $\omega^\n$ is uniformly bounded.

By integration by parts,    
$$
    \int_{\Omegat} ( \nabla u - \nabla \tu ) \cdot ( \nabla \tu  - \nabla  \uht ) = 
       % \int_{\Omegat} ( \nabla u - \nabla \tu ) \cdot ( \nabla \tu  - \nabla  \uht ) = 
        -\int_{\Omegat} (\LAPL u - \LAPL \tu ) (\tu-\uht) + \int_{\partial\Omegat} (\nabla u - \nabla \tu)\cdot \bn (\tu-\uht) = 0\,.
$$
Hence
$$
    \norm[\Omega]{ \nabla ( u - \uhc ) }^2 = 
    \norm[\Omegap]{ \nabla ( u - \up ) }^2  
     + \norm[\Omegat]{ \nabla (u-\tu) +\nabla (\tu- \uht) }^2
    = \norm[\Omega]{ \nabla ( u - \uc ) }^2 + \norm[\Omegat]{ \nabla \tu- \nabla \uht }^2\,,
$$
so that
$$
 \norm[\Omega]{ \sigmahc + \nabla \uhc } ^2 \leq 
 C \norm[\Omega]{ \nabla ( u - \uhc ) }\,,
$$
i.e. \eqref{eq:lower}.
\end{proof}

\section{Numerical results}
\label{sec:numerical}

We report here the results obtained using the 0D/2D model \eqref{eq:laplaceOmegatWeak}--\eqref{eq:uhc} for the problem  \eqref{laplace}--\eqref{eq:bc} in the domain presented in \cref{fig:coupledDomain} with $L=5.1$, $R=0.5$, $W=0.9$, and $L_{\rm out}=0.8$. All the computations are performed in FreeFEM \cite{Hecht12} and we use $\Po{2}$-Lagrange FE for the 2D model in $\Omegat$. 
Since we do not dispose of an analytical solution to \eqref{laplace}--\eqref{eq:bc}, we use a reference solution obtained with $\Po{2}$-Lagrange FEM on a fine quasi-uniform mesh on $\Omega$ with mesh size $h=0.005$ for the tests in \cref{sec:numValidation}, and on a very fine adapted mesh on $\Omega$ with 1540177 Degrees of Freedom (DoF) for the tests in \cref{sec:algo}. The reference solution on the quasi-uniform mesh is shown  in \cref{fig:solution}.

\subsection{Numerical validation on quasi-uniform meshes}\label{sec:numValidation}

In \cref{fig:test12}, we report on a series of numerical experiments varying the interface position $x_\Gamma$ from $0.1$ (near the inlet) to $5.08$ (practically in the corner), and employing quasi-uniform meshes on $\Omegat$, composed of almost equilateral triangles of approximately the same size. In \cref{fig:test1a,fig:test1b,fig:test1c}, we plot the total error $e$ \eqref{eq:err} and the total estimator $\eta$ \eqref{eq:eta} vs. the interface positions $x_\Gamma$,  using the meshes of maximal sizes $h\approx 0.08, 0.04, 0.02$ respectively. 
As expected, the ``modeling'' error caused by 0D model simplification is negligible over a wide range of interface positions. It becomes predominant only when the interface is placed very near the corner $x_\Gamma\to 5.1$. 
In \cref{fig:test2a,fig:test2b,fig:test2c}, we plot the  total error $e$ and the total estimator $\eta$ with respect to different mesh sizes $h$ and for an interface position $x_\Gamma$ fixed respectively to $0.1, 4.6$ and $5.08$. 
We choose these values because they represent the three typical cases: i)~the interface placed too far from the corner so that the meshed domain $\Omegat$ is unnecessarily big; ii)~an optimal position of the interface providing a good balance between the modeling error and the discretization error,  thus minimizing the size of the meshed domain without compromising the overall accuracy;     (iii)~interface placed too close to the corner, i.e. in the range of $x_\Gamma$ values where the error/estimator grows up abruptly in the plots on \cref{fig:test1a,fig:test1b,fig:test1c}. 
%the efficiency is ensured in \cref{thm:globalBounds} for interface positions $x_\Gamma$ such that $x_\Gamma+R<L$ (and here $R=0.5$ and $L=5.1$), and also because from 
%Figure \ref{fig:test1} we show that this position is in the range where the error/estimator is the same as the case where the interface is very far from the corner. 
We observe the convergence under the mesh refinement in the first 2 situations ($x_\Gamma=0.1$ and $4.6$), contrary to  the last situation with $x_\Gamma=5.08$, where the convergence is lost, due to the bad choice of interface. 
When the interface is well chosen, like in \cref{fig:test2a,fig:test2b}, the rate of convergence is sub-optimal with respect to what one would expect on a smooth benchmark solution. 
This is not surprising because the actual solution is singular near the reentrant corners. % RT : we can put also that "Omega is not convex, with a corner, so that"...
In the next \cref{sec:algo}, we shall employ a doubly adaptive strategy (for interface position and for the mesh), which will enable us to restore an optimal rate of convergence. 

All the plots of \cref{fig:test12} confirm that  the estimator $\eta$ provides indeed an upper bound for the error $e$ and both quantities are always of the same order of magnitude, even if $x_{\Gamma}+R> L$, cf. Remark~\ref{rmk:psiGamma}. 
A study of the index of efficiency that indicates the optimality of the estimator will be carried out in the next subsection in the case of adapted meshes.

\begin{figure}[h!]
\centering
  %\tikzexternaldisable
  %\ref{conv.legend.test1}
  \vskip5mm
  \begin{minipage}[b]{0.45\linewidth}\centering
    \begin{tikzpicture}[scale=0.65]
%    	\ref{conv.legend.test1a}
      \begin{axis}[ymin=0.002, ymax=0.098, legend style={at={(0.1,0.9)},anchor=north west} ]
      %[ legend columns=2, legend to name=conv.legend.test1a ]
        \addplot table[x=xCut,y=totErr] {images/dat/test1a.dat};
        \addplot table[x=xCut,y=totEst] {images/dat/test1a.dat};
        \legend{$e$, $\eta$};
      \end{axis} 
    \end{tikzpicture}
    \subcaption{$e$, $\eta$ vs. $x_{\Gamma}$, $h=0.0789$.\label{fig:test1a}}
  \end{minipage}
  \begin{minipage}[b]{0.45\linewidth}\centering
    \begin{tikzpicture}[scale=0.65]
      %  \ref{conv.legend.test2a}
      \begin{loglogaxis}[legend style={at={(0.1,0.9)},anchor=north west} ]
      %[  legend to name=conv.legend.test2a ]
        \addplot table[x=meshSize,y=totErr] {images/dat/test2a.dat};
        \addplot table[x=meshSize,y=totEst] {images/dat/test2a.dat};
       \logLogSlopeTriangle{0.90}{0.4}{0.1}{1}{black};
       \legend{$e$, $\eta$};
      \end{loglogaxis}
    \end{tikzpicture}
    \subcaption{$e$ and $\eta$ vs. $h$, $x_\Gamma=0.1$.\label{fig:test2a}}
  \end{minipage}
    \begin{minipage}[b]{0.45\linewidth}\centering
   \begin{tikzpicture}[scale=0.65]
      %\ref{conv.legend.test2b}
      \begin{axis}[ymin=0.002, ymax=0.098, legend style={at={(0.1,0.9)},anchor=north west} ]%[ legend columns=-1, legend to name=conv.legend.test1b ]
        \addplot table[x=xCut,y=totErr] {images/dat/test1b.dat};
        \addplot table[x=xCut,y=totEst] {images/dat/test1b.dat};
        \legend{$e$, $\eta$};
      \end{axis} 
    \end{tikzpicture}
    \subcaption{$e$, $\eta$ vs. $x_{\Gamma}$, $h=0.0410$.\label{fig:test1b}}
  \end{minipage}
   \begin{minipage}[b]{0.45\linewidth}\centering
   \begin{tikzpicture}[scale=0.65]
      %\ref{conv.legend.test2b}
      \begin{loglogaxis}[legend style={at={(0.1,0.9)},anchor=north west} ]%[ legend columns=-1, legend to name=conv.legend.test2b ]
        \addplot table[x=meshSize,y=totErr] {images/dat/test2b.dat};
        \addplot table[x=meshSize,y=totEst] {images/dat/test2b.dat};
       \logLogSlopeTriangle{0.90}{0.4}{0.1}{1}{black};
        \legend{$e$, $\eta$};
      \end{loglogaxis}
    \end{tikzpicture}
    \subcaption{$e$ and $\eta$ vs. $h$, $x_\Gamma=4.6$.\label{fig:test2b}}
  \end{minipage}
  \begin{minipage}[b]{0.45\linewidth}\centering
   \begin{tikzpicture}[scale=0.65]
      %\ref{conv.legend.test1c}
      \begin{axis}[ymin=0.002, ymax=0.098, legend style={at={(0.1,0.9)},anchor=north west} ]%[ legend columns=-1, legend to name=conv.legend.test1c ]
        \addplot table[x=xCut,y=totErr] {images/dat/test1c.dat};
        \addplot table[x=xCut,y=totEst] {images/dat/test1c.dat};
        \legend{$e$, $\eta$};
      \end{axis} 
    \end{tikzpicture}
    \subcaption{$e$, $\eta$ vs. $x_{\Gamma}$, $h=0.02159$.\label{fig:test1c}}
  \end{minipage} 
    \begin{minipage}[b]{0.45\linewidth}\centering
    \begin{tikzpicture}[scale=0.65]
      \begin{loglogaxis}[ymin=0.01,ymax=0.2,legend style={at={(0.1,0.94)},anchor=north west} ]
      %[ymin=0.01,ymax=0.2, legend columns=-1, legend to name=conv.legend.test1 ]
        \addplot table[x=meshSize,y=totErr] {images/dat/test2c.dat};
        \addplot table[x=meshSize,y=totEst] {images/dat/test2c.dat};
        %\logLogSlopeTriangle{0.90}{0.4}{0.1}{1}{black};
       \legend{$e$, $\eta$};
      \end{loglogaxis}
    \end{tikzpicture}
    \subcaption{$e$ and $\eta$ vs. $h$, $x_\Gamma=5.08$.\label{fig:test2c}}
  \end{minipage}
  \caption{Comparison between error $e$ and error estimator $\eta$: with respect to the interface position $x_\Gamma$ for a fixed uniform mesh with mesh size $h$, see \cref{fig:test1a,fig:test1b,fig:test1c}, and with respect to the mesh size $h$ once the interface position $x_\Gamma$ is fixed, see \cref{fig:test2a,fig:test2b,fig:test2c}.\label{fig:test12}}
\end{figure}
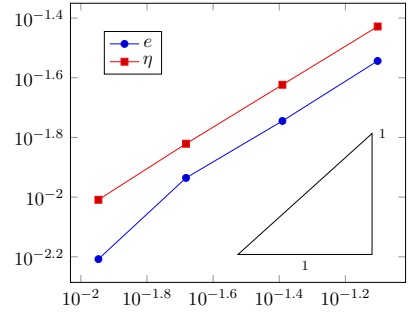
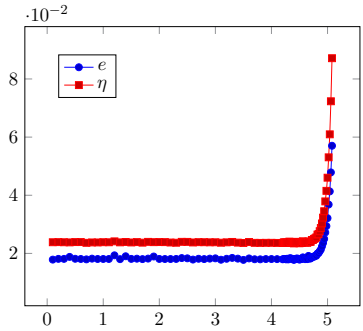
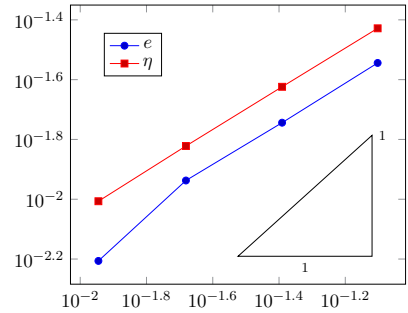
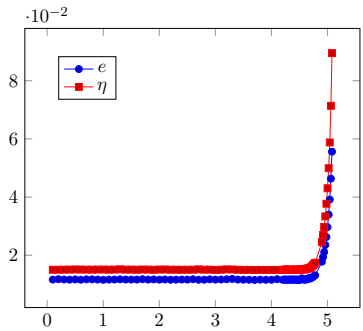
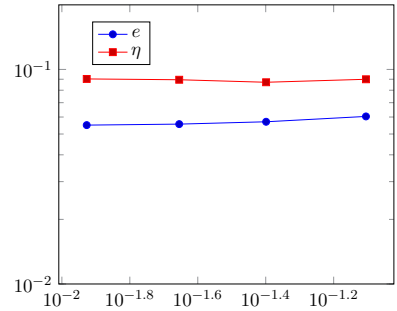

\subsection{Adaptive algorithm}\label{sec:algo}

We now inspire ourselves from the construction of the estimator \eqref{eq:eta} to detect a suitable interface position $x_\Gamma$. Guided by the proof of the upper and lower bounds for the error (Steps 2 and 3 of the proof of Theorem \ref{thm:globalBounds} in particular), we conjecture that the quantity
\begin{equation}\label{eq:etaGamma}
    \eta^\Gamma \coloneqq \norm[\omega^\Gamma]{\sigmaGammah+(\nabla \uht) \psi^{\Gamma}_h}\,,
\end{equation}
can indicate the modelling error, caused by the replacement of the 2D model by the 0D one, and can thus drive the choice of the interface position. A theoretical motivation for this conjecture stems from the estimate \eqref{eq:LowerIntfDisc} established in the proof of our main theorem. $\eta^\Gamma$ is bounded there by two terms, the second one being precisely the part of the error caused by the introduction of the coupling interface. This contribution is global. The first contribution in \eqref{eq:LowerIntfDisc} represents a discretization error, locally on the patch $\omega^\Gamma$. This local contribution should be negligible in most practical cases, in comparison with the global one.     
A numerical evidence for the pertinence of $\eta^\Gamma$ is provided  in \cref{fig:test12EtaOmageGamma}. We plot there $\eta^\Gamma$ vs. the interface position (using quasi-uniform meshes of the same size), and $\eta^\Gamma$ vs. the mesh size, once the interface position has been fixed. We observe that $\eta^\Gamma$ is indeed almost independent of the mesh refinement, and the range of interface positions where it grows abruptly coincides with the similar region for the actual error, cf. \cref{fig:test1a,fig:test1aGamma}. We conclude that $\eta^\Gamma$ can be used to detect the part of the error caused by the interface position. %We shall thus use it as  a criterion to choose a suitable interface position in the forthcoming  \cref{algo:adaptive}. 

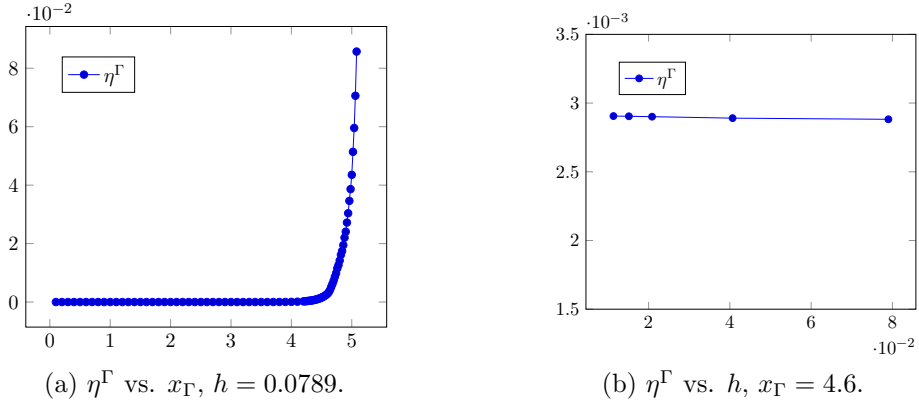
\begin{figure}
\centering
  %% \tikzexternaldisable
 % \ref{conv.legend.test1Gamma}
  \vskip5mm
  %% \tikzexternalenable
  \begin{minipage}[b]{0.45\linewidth}\centering
    \begin{tikzpicture}[scale=0.7]
      \begin{axis}[legend style={at={(0.1,0.9)},anchor=north west} ]
      %[ legend columns=-1, legend to name=conv.legend.test1Gamma ]
        \addplot table[x=xCut,y=EstGamma] {images/dat/test1a.dat};
        \legend{$\eta^\Gamma$};
      \end{axis} 
    \end{tikzpicture}
    \subcaption{$\eta^\Gamma$ vs. $x_{\Gamma}$, $h=0.0789$.\label{fig:test1aGamma}}
  \end{minipage}
  \begin{minipage}[b]{0.45\linewidth}\centering
    \begin{tikzpicture}[scale=0.64]
      \begin{axis}[ymin=1.5E-3,ymax=3.5E-3,legend style={at={(0.1,0.9)},anchor=north west} ]
      %[ ymin=1.5E-3,ymax=3.5E-3, legend columns=-1, legend to name=conv.legend.test1Gamma ]
        \addplot table[x=meshSize,y=EstGamma] {images/dat/test2bBIS.dat};
       \legend{$\eta^\Gamma$};
      \end{axis} % loglogaxis produces error
    \end{tikzpicture}
    \subcaption{$\eta^\Gamma$ vs. $h$, $x_\Gamma=4.6$.\label{fig:test2aGamma}}
  \end{minipage}
  \caption{Evolution of $\eta^\Gamma$ with respect to the interface position $x_\Gamma$ and a fixed mesh size $h$ and with respect to the mesh refinement with a fixed $x_\Gamma$.\label{fig:test12EtaOmageGamma}}
\end{figure}

%For this reason, in the \cref{algo:adaptive} below we use it as a criterion to choose a suitable interface position (see \cref{algo:while1}) and we use also a coarse mesh to detect the interface position (see \cref{algo:coarse}). 

Once the interface is fixed, it remains to refine the mesh on $\Omegat$ locally.  We use for this the error indicators defined on each mesh cell as
\begin{equation}\label{eq:etaD} 
\eta^\mathrm{D}_{\K} \coloneqq 
\norm[\K]{ (\mathbf{\sigma}_{h}^\mathrm{D}   +\nabla \uht )_{|\K} }\text{ for all $\K\in\Th$ } ,\text{ where } 
\mathbf{\sigma}_{h}^\mathrm{D} \coloneqq \sum_{\n\in\NhWGamma} \sigmaN\,
\end{equation}
(the superscript ‘‘D'' here refers to the ‘‘discretization'' part of the error). These are the usual equilibrated flux error indicators, normally used for a problem with Dirichlet boundary conditions everywhere. This seems a reasonable choice since the ‘‘modeling'' error is supposed to be controlled at this stage by a proper choice of the coupling interface.

Our approach is summarised in \cref{algo:adaptive}. We choose first a suitable interface using the error indicator $\eta^\Gamma$ \eqref{eq:etaGamma} in the loop in \cref{algo:while1}, employing coarse quasi-uniform meshes, cf.~\cref{algo:coarse}. The goal is to find an interface position such that the error indicator $\eta^\Gamma$ \eqref{eq:etaGamma} is smaller than the prescribed tolerance $\mathrm{tol}^\Gamma$, which is in turn sufficiently smaller than the desired tolerance $\mathrm{tol}$ for the overall error. At the second stage of the Algorithm, \cref{algo:while2}, we perform the mesh adaptation loop without ever moving the interface again. Note that the mesh adaptation is guided by the local error indicators $\{ \eta^\mathrm{D}_{\K}\}_{\K\in\Th}$ from \eqref{eq:etaD}, while the overall error is estimated by the global error indicator $\eta$ from our main Theorem, which controls effectively both the modeling and discretization errors. 
The details of our approach to the mesh adaptation (i.e. constructing the new mesh using the error indicators on the current mesh) are given in \cref{sec:adaptationFreefem}.

\begin{algorithm}%[h!]
\caption{Adaptive interface position and mesh refinement}\label{algo:adaptive}
\begin{algorithmic}[1]
\REQUIRE{A tolerance for the global error $\mathrm{tol}$,  a tolerance for the error due to the interface position (modelling error) $\mathrm{tol}^\Gamma$, a step to move the interface $\delta x>0$.}
\STATE{Set the interface position at coordinate $x_\Gamma=L-R$, cf. \cref{fig:coupledDomain}}
\STATE{Construct a coarse mesh on $\Omegat$, compute the coupled solution \eqref{eq:uhc} and $\eta^\Gamma$, cf. \eqref{eq:etaGamma}\label{algo:coarse}}
\WHILE{ $\eta^\Gamma > \mathrm{tol}^\Gamma$ and $ x_\Gamma > 0 $ \label{algo:while1}}
\STATE{Redefine $x_\Gamma := x_\Gamma - \delta x$}
\STATE{Construct a coarse mesh on the new $\Omegat$, compute the coupled solution and $\eta^\Gamma$}
\ENDWHILE
\STATE{Compute $\eta$, as in \eqref{eq:eta}, and $\eta^\mathrm{D}_{\K}$, as in \eqref{eq:etaD}}
\WHILE{ $\eta > \mathrm{tol}$ \label{algo:while2}}
\STATE{Construct a new mesh on $\Omegat$ based on the error indicators $\eta^\mathrm{D}_{\K}$ (cf.  \cref{sec:adaptationFreefem})}
\STATE{Compute the coupled solution $u_h^c$, $\eta$, and $\eta^\mathrm{D}_{\K}$ on the new mesh}
\ENDWHILE
\RETURN Coupled solution and $x_\Gamma$
\end{algorithmic}
\end{algorithm}

We have tested \cref{algo:adaptive} for two values of the desired tolerance: 
$\mathrm{tol}=1e-2$ and $\mathrm{tol}=1e-4$. The other parameters were set to $\delta x=0.1$ and $\mathrm{tol}^\Gamma=0.1\,\mathrm{tol}$
(numerical experiments with other choices of $\mathrm{tol}^\Gamma$, ex. $\mathrm{tol}^\Gamma=0.5\,\mathrm{tol}$, give similar results but are not reported here).  As expected, the Algorithm puts the interface further from the corner when we decrease the tolerance:  $x_\Gamma=4.4$ for $\mathrm{tol}=1e-2$ and $x_\Gamma=3.8$ for $\mathrm{tol}=1e-4$.
Convergence of the error and the estimator with respect to the number of DoF on the iterations of the mesh adaptation loop (once the interface has been chosen) is presented in \cref{tab:adapt_test} and \cref{fig:adapt_tests}. 
We report in \cref{tab:adapt_test}, on subsequent adapted meshes, the numbers of DoF, the error $e$, the estimator $\eta$, and the effectivity index $I = \nicefrac{\eta}{e}$. The algorithm needed 4 iterations with $\mathrm{tol}=1e-2$ and 7 iterations for $\mathrm{tol}=1e-4$, with $I$ close to 1 in both cases. We also observe in \cref{fig:adapt_tests} that the error and the estimator converge optimally with respect to the number of DoF, i.e. at the rate that would be expected on quasi-uniform meshes if the solution were smooth. 

% I have to cut off some decimals for error and estimator to have a good layout
\begin{table}%[tbhp]
\begin{minipage}[b]{0.43\linewidth}\centering
\begin{tabular}{|c|c|c|c|} \hline
    DoF &  $e$ &  $\eta$ & $I$ \\ \hline
    84 & 1.4968e-1  &    1.7087e-1 & 1.1 \\%1.14 \\
    302 &  5.1694e-2 &   6.5485e-2 & 1.3 \\%1.27 \\ 
    417 & 2.9590e-2 &    3.5823e-2 & 1.2 \\%1.21 \\
    814 & 1.3262e-2 &    1.5647e-2 & 1.2 \\%1.18 \\
    2395 & 4.7947e-3 &    5.6709e-3 & 1.2 \\\hline%1.18 \\\hline 
\end{tabular}
\subcaption{$\mathrm{tol}=1e-2$, $x_\Gamma=4.4$.\label{tab:adapt_test1}}
\end{minipage}
\begin{minipage}[b]{0.43\linewidth}\centering
\begin{tabular}{|c|c|c|c|} \hline
   DoF &  $e$ &  $\eta$ & $I$ \\ \hline
98   & 1.5490e-1 &    1.7616e-1 & 1.1\\%1.14\\
347  & 5.3231e-2 &    6.6758e-2 & 1.3\\%1.25 \\
512  &   2.6044e-2 &    3.1548e-2 &  1.2\\%1.21 \\
1269 &    1.0420e-2 &    1.2632e-2 & 1.2\\%1.21 \\
2721 &    3.8437e-3 &   4.5794e-3 &   1.2\\%1.19 \\
8388 &    1.2095e-3  &    1.4192e-3 & 1.2\\%1.17 \\
27531 &    3.3055e-4 &   3.9007e-4 &  1.2\\%1.18 \\
106489 & 8.42367e-5 &   9.9955e-5 &  1.2\\\hline%1.19\\\hline
\end{tabular}
\subcaption{$\mathrm{tol}=1e-4$, $x_\Gamma=3.8$.\label{tab:adapt_test2}}
\end{minipage}
\footnotesize
\caption{Results of \cref{algo:adaptive} with $\mathrm{tol}=1e-2$ and $\mathrm{tol}=1e-4$.}\label{tab:adapt_test}
\end{table}
\begin{figure}[h!]
\centering
  %% \tikzexternaldisable
  %\ref{conv.legend.cvAdapt}
  \vskip5mm
  %% \tikzexternalenable
  \begin{minipage}[b]{0.45\linewidth}\centering
    \begin{tikzpicture}[scale=0.7]
      \begin{loglogaxis}[legend style={at={(0.7,0.9)},anchor=north west} ]
      %[ legend columns=-1, legend to name=conv.legend.cvAdapt ]
        \addplot table[x=DoFs,y=ErrorOmega] {images/dat/adapt_test1.dat};
        \addplot table[x=DoFs,y=EstimatorOmegaTilde] {images/dat/adapt_test1.dat};
        \logLogSlopeTriangleBIS{0.6}{0.2}{0.1}{-1}{black};
        \legend{$e$, $\eta$};
      \end{loglogaxis} 
    \end{tikzpicture}
    \subcaption{$e$, $\eta$ vs. DoF, $x_\Gamma=4.4$.\label{fig:adapt_test1}}
  \end{minipage}
  \begin{minipage}[b]{0.45\linewidth}\centering
    \begin{tikzpicture}[scale=0.7]
      \begin{loglogaxis}[legend style={at={(0.7,0.9)},anchor=north west} ]
      %[ legend columns=-1, legend to name=conv.legend.cvAdapt ]
        \addplot table[x=DoFs,y=ErrorOmega] {images/dat/adapt_test2.dat};
        \addplot table[x=DoFs,y=EstimatorOmegaTilde] {images/dat/adapt_test2.dat};
        \logLogSlopeTriangleBIS{0.6}{0.2}{0.1}{-1}{black};
       \legend{$e$, $\eta$};
      \end{loglogaxis}
    \end{tikzpicture}
    \subcaption{$e$, $\eta$ vs. DoF, $x_\Gamma=3.8$.\label{fig:adapt_test2}}
  \end{minipage}
  \caption{Evolution of $e$ and $\eta$ vs. the number of DoF for a fixed $x_\Gamma$.\label{fig:adapt_tests}}
\end{figure}
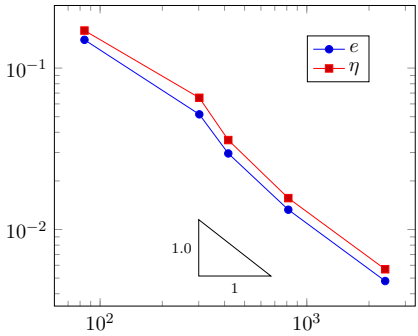
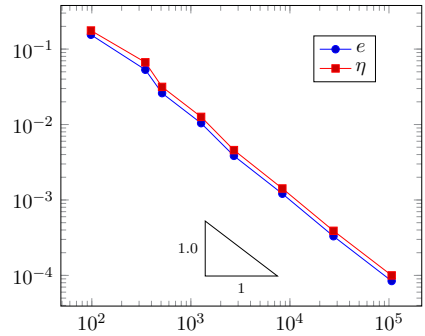

In \cref{fig:adaptedMesh}, we plot the initial coarse mesh and one of the adaptive meshes constructed by  (\cref{sec:adaptationFreefem}) with $\mathrm{tol}=1e-4$. As expected, the algorithm refines mostly near the corner where the solution is singular. 
In \cref{fig:errest5}, we present the distributions of the error and of the estimator on the same adapted mesh given in \cref{fig:adaptedMesh}. We observe that they are locally equivalent, 
%and that the local index of efficiency is between 0.4 and 1.5, 
confirming that our choice of the local estimators is reasonable.
%We remark that the aim of the mesh adaptivity is to obtain a mesh with an error equi-distributed as much as possible over each mesh element. 
At \cref{fig:adaptedMesh1}, we also observe that the error is sufficiently well equi-distributed on mesh cells (the error of about $1e-5$ per cell) with the exception of the cells near the reentrant corners, where more refinement is clearly needed. In particular, this confirms that the relatively  fine meshes in the outer corners are indeed a reasonable choice at this iteration of the mesh adaptation.

\begin{figure}[tbhp]
%\centering
\subfloat[ The initial mesh. ]{\label{fig:adaptedMesh0}
\includegraphics[scale=0.28]{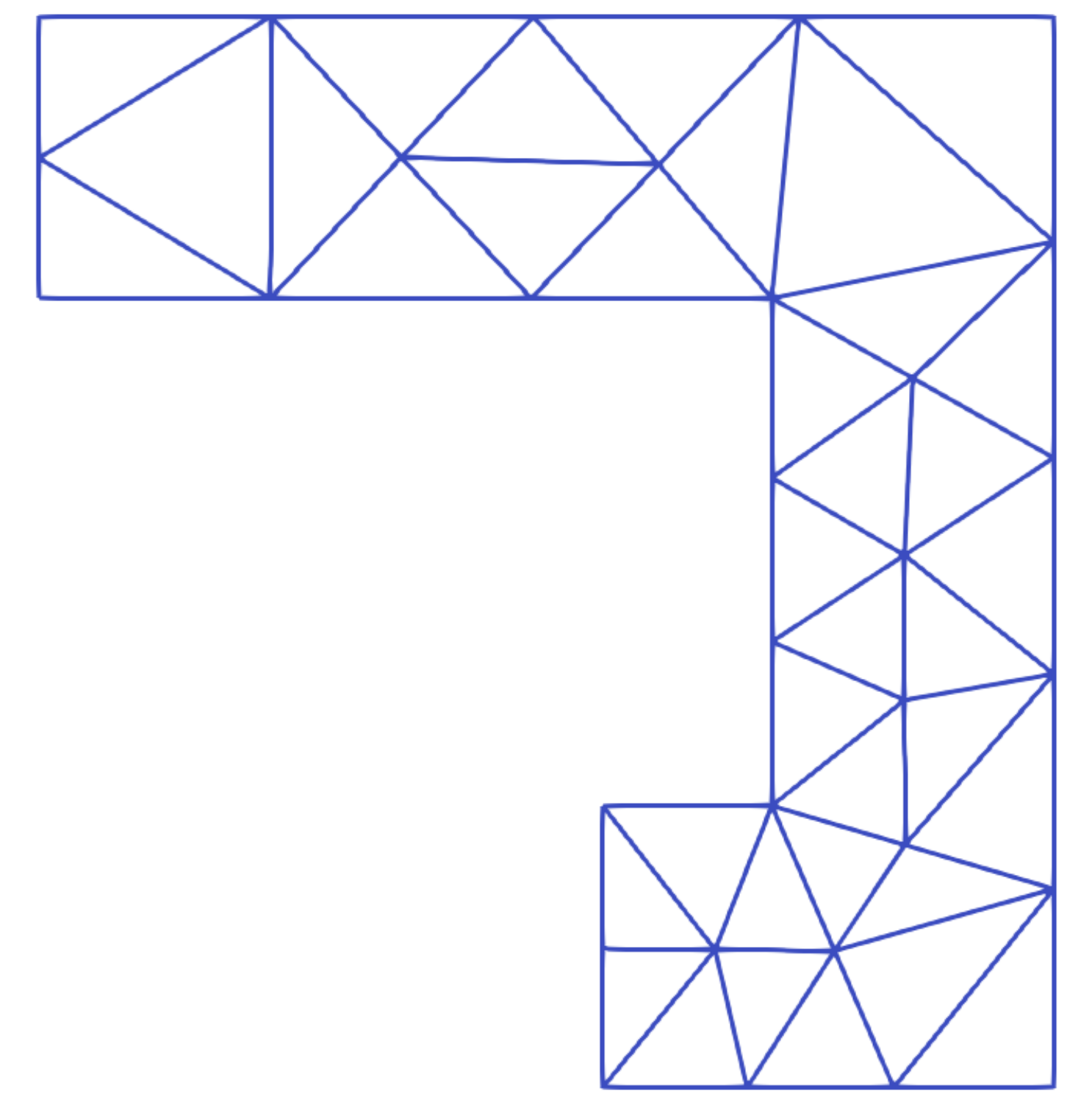}} 
 \quad%trim=left bottom right top
\subfloat[ The adapted mesh on iteration 5.]{\label{fig:adaptedMesh1}
\includegraphics[scale=0.3]{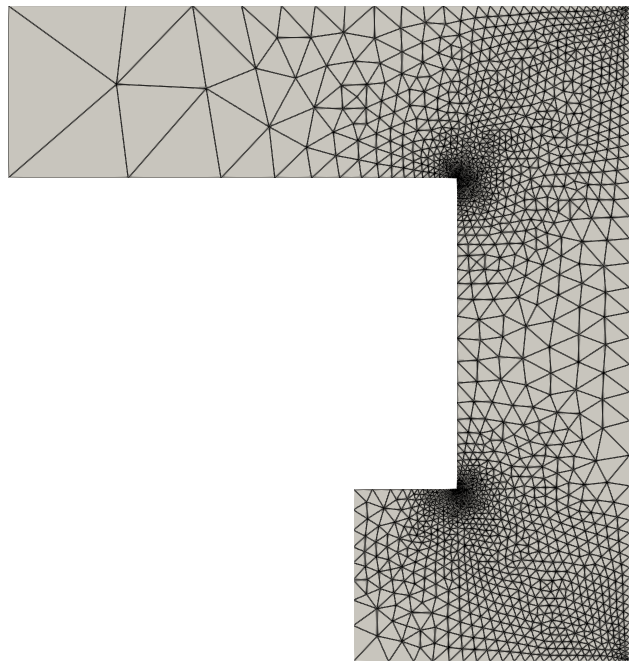}}
\caption{Two meshes on $\Omegat$ constructed by \cref{algo:adaptive} with $\mathrm{tol}=1e-4$.}\label{fig:adaptedMesh}
\end{figure}

\begin{figure}[tbhp]
\centering
\subfloat[Error distribution with a zoom in the corner.]{\label{fig:err5}
\includegraphics[scale=0.38]{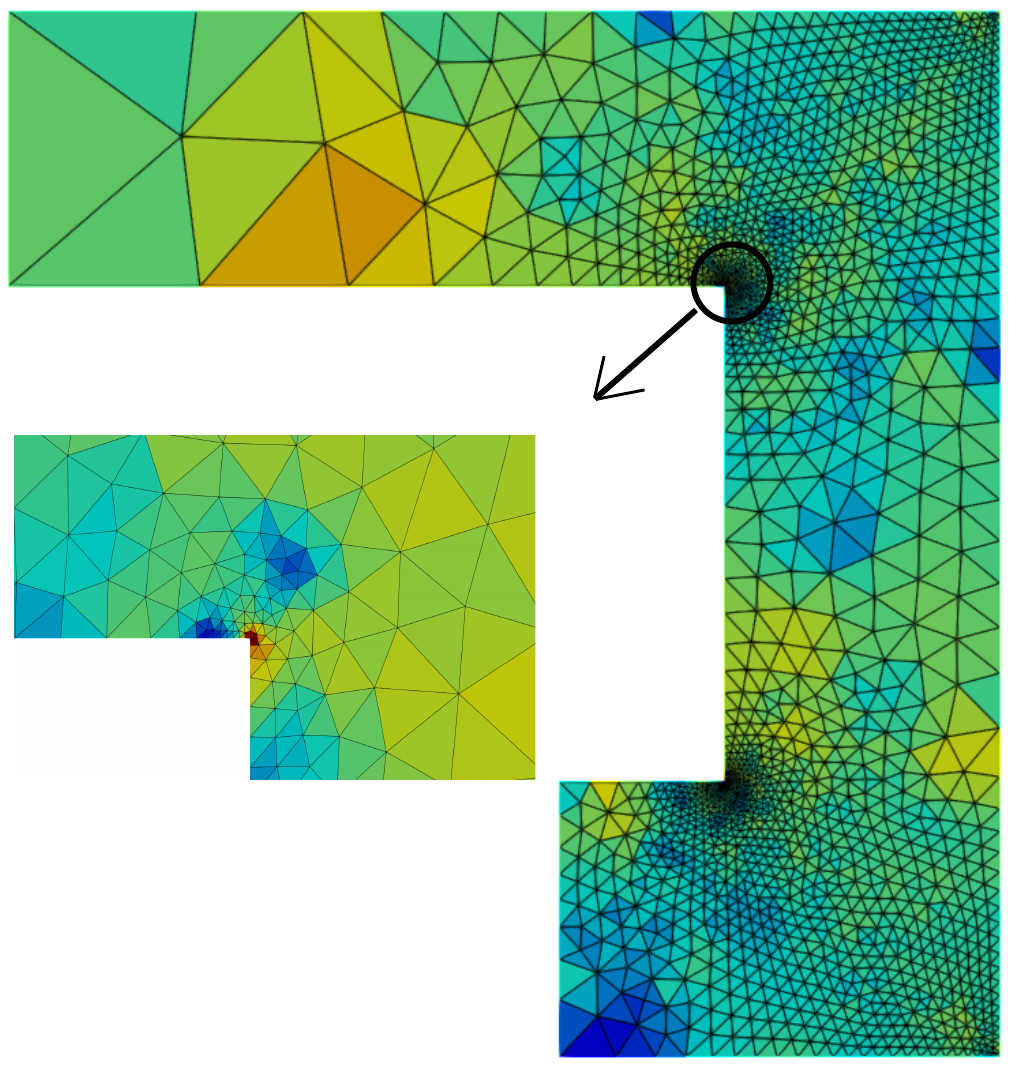}
\includegraphics[scale=0.3]{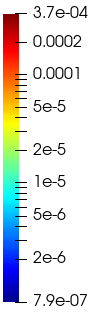}}
\subfloat[Estimator distribution with a zoom  in the corner.]{\label{fig:est5}
\includegraphics[scale=0.38]{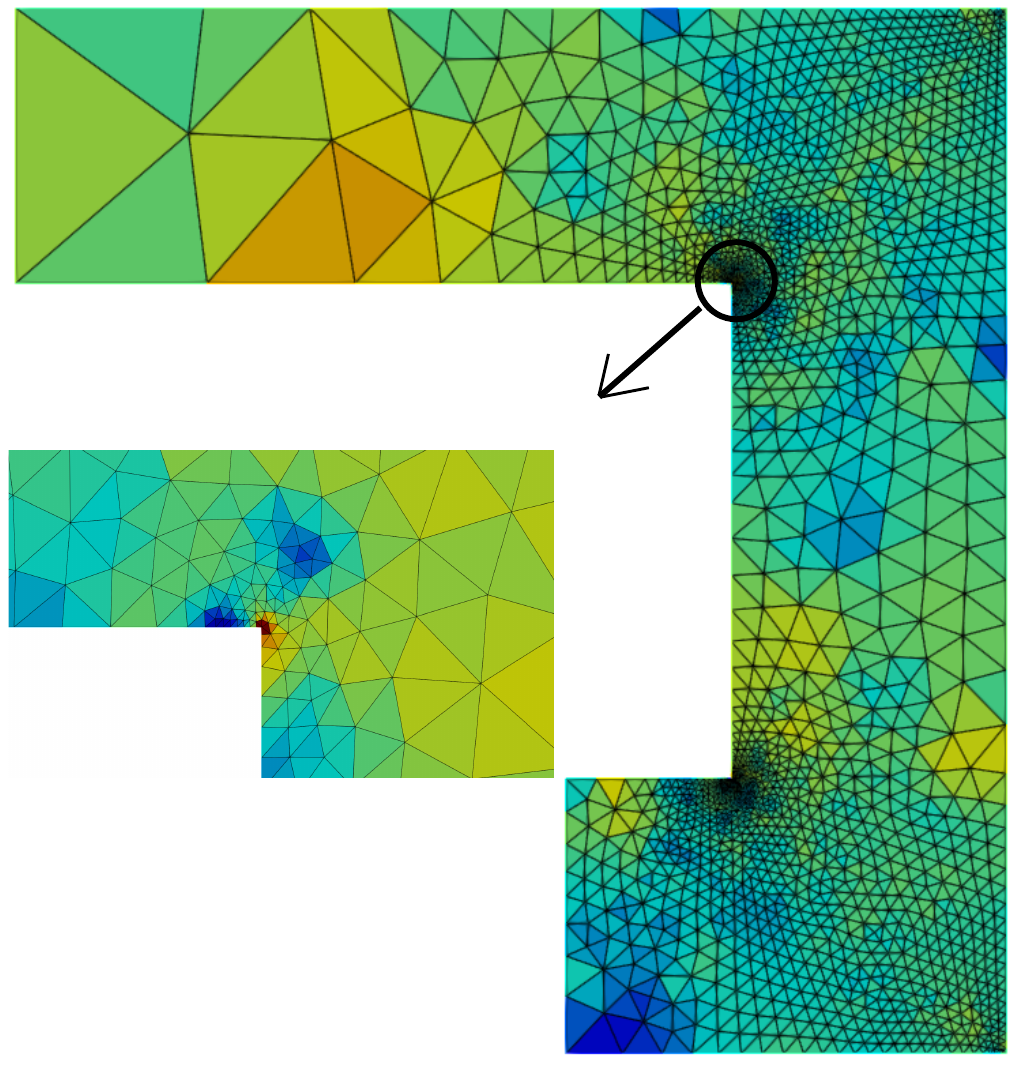} %est2 a la meme legend que err
\includegraphics[scale=0.3]{images/err_legend}}
\caption{Distributions of the error and the estimator on iteration 5 of \cref{algo:adaptive}  with $\mathrm{tol}=1e-4$.}\label{fig:errest5}
\end{figure}

\section{Conclusions}
\label{sec:conclusions}
In this paper we have proposed a coupled approach to solve a 2D Poisson problem in a domain with long channels, splitting the domain into a portion where the solution is approximated by a simple analytical expression and another one where it is approximated numerically. 
We have developed an \textit{a posteriori} error estimator that, for a given tolerance on the error, allows us both to choose the position of the coupling interface and to adapt the mesh. Our theoretical results, namely that the estimator is guaranteed and efficient, are confirmed and enhanced by numerical experiments on automatically adapted meshes. Some partial results for the Stokes equation in a similar setting are available in \cite{HusseinPHD}. We hope to adapt this approach to more complex governing equations in a future work. 

\appendix
\section{Some technical Lemmas}
We recall here two well-known lemmas, needed for the proof of \cref{thm:globalBounds}, and give their proofs for completeness of exposition. 
Let $H_{00}^s (\Gamma)$, with $s \in \{-\frac 12,\frac 12\}$ be the spaces  of functions (distributions) on $\Gamma$ of the form 
\begin{equation}\label{eq:etaFourier} 
	\eta = \sum_{k \geqslant 1} \eta_k \sin \left( \frac{k\pi}{R} y \right)\,, 
\end{equation}
 with the norm $
	\norm[s,\Gamma]{\eta} = \left( \sum_{k \geqslant 1} \frac{R}{2}\eta_k^2 \left(\frac{k \pi}{R} \right)^{2 s} \right)^{\nicefrac{1}{2}}$.

\begin{lemma}\label{lemma:1}
	For any $\eta \in H_{00}^{-\nicefrac{1}{2}} (\Gamma)$, let $\theta$ be the solution to
	\begin{align}
		\LAPL \theta &= 0 && \text{ in } \omega^{\Gamma}_R \,,\nonumber \\
		\nabla \theta \cdot \bn &= \eta &&\text{ on } \Gamma \,, \nonumber\\
		\nabla \theta \cdot \bn &= 0 &&\text{ on } \Gamma_R \,,\nonumber\\
		\theta &= 0 &&\text{ on }  \GammaWallt\cap\partial\omega_{R}^{\Gamma}\,.\nonumber
	\end{align}
	Then 
	$\norm[\omega_R^{\Gamma}]{\nabla \theta} \leqslant C_1 \norm[- \nicefrac{1}{2}, \Gamma]{\eta}$
	with $C_1 > 0$ which does not depend on $R$.
\end{lemma}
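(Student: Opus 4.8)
The plan is to solve the boundary value problem explicitly by separation of variables, exploiting the fact that, under the standing assumption $x_\Gamma+R\le L$, the patch $\omega^\Gamma_R$ is exactly the square $(x_\Gamma,x_\Gamma+R)\times(0,R)$. The homogeneous Dirichlet condition on $\GammaWallt\cap\partial\omega_R^\Gamma$ (the edges $y=0$ and $y=R$) dictates expanding everything in the same sine basis used to define $H^{-1/2}_{00}(\Gamma)$ in \eqref{eq:etaFourier}. First I would write $\theta(x,y)=\sum_{k\ge1}T_k(x)\sin(\lambda_k y)$ with $\lambda_k=\frac{k\pi}{R}$, so that the Dirichlet conditions on the horizontal edges hold automatically and $\LAPL\theta=0$ reduces to the decoupled ODEs $T_k''=\lambda_k^2 T_k$ on the interval $(x_\Gamma,x_\Gamma+R)$, i.e. $T_k(\xi)=A_k\cosh(\lambda_k\xi)+B_k\sinh(\lambda_k\xi)$ in the shifted variable $\xi=x-x_\Gamma$.

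Next I would impose the two Neumann conditions. On $\Gamma$ the outward normal is $-\mathbf e_x$, so $\nabla\theta\cdot\bn=\eta$ reads $T_k'(0)=-\eta_k$, while on $\Gamma_R$ the outward normal is $+\mathbf e_x$, giving $T_k'(R)=0$. Solving these two linear relations yields $B_k=-\eta_k/\lambda_k$ and $A_k=\frac{\eta_k}{\lambda_k}\coth(\lambda_k R)$, which is well defined since $\sinh(\lambda_k R)\neq0$. The core computation is then the energy. By orthogonality of the sine and cosine systems on $(0,R)$ (each with mass $R/2$),
\[
\norm[\omega_R^\Gamma]{\nabla\theta}^2=\frac{R}{2}\sum_{k\ge1}\int_0^R\bigl[(T_k')^2+\lambda_k^2 T_k^2\bigr]\,d\xi,
\]
and integrating $(T_k')^2$ by parts and using $T_k''=\lambda_k^2 T_k$ collapses each modal integral to the boundary term $[T_kT_k']_0^R=T_k(R)T_k'(R)-T_k(0)T_k'(0)$. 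With $T_k'(R)=0$ and $T_k'(0)=-\eta_k$ this equals $\eta_k A_k=\frac{\eta_k^2}{\lambda_k}\coth(\lambda_k R)$, so that
\[
\norm[\omega_R^\Gamma]{\nabla\theta}^2=\frac{R}{2}\sum_{k\ge1}\frac{\eta_k^2}{\lambda_k}\coth(k\pi),
\]
where I have used $\lambda_k R=k\pi$.

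To conclude I would compare this with $\norm[-\nicefrac12,\Gamma]{\eta}^2=\frac{R}{2}\sum_k\frac{\eta_k^2}{\lambda_k}$ (the case $2s=-1$ of the norm in the excerpt). Since $\coth$ is decreasing on $(0,\infty)$ and $\lambda_k R=k\pi\ge\pi$ for all $k\ge1$, we have $\coth(k\pi)\le\coth(\pi)$ uniformly, hence $\norm[\omega_R^\Gamma]{\nabla\theta}\le\sqrt{\coth(\pi)}\,\norm[-\nicefrac12,\Gamma]{\eta}$, giving the claim with $C_1=\sqrt{\coth(\pi)}$. The one point deserving care — and the reason the estimate is uniform — is precisely the $R$-independence: although both the energy and the norm carry explicit factors of $R$ and $\lambda_k$, the only dimensionless combination entering the hyperbolic factor is $\lambda_k R=k\pi$, so the constant $\coth(\pi)$ does not see $R$ at all. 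I expect the main (modest) obstacle to be bookkeeping the normal-derivative sign conventions and justifying the termwise manipulations of the series, rather than any genuine analytic difficulty.
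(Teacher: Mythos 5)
Your proof is correct and follows essentially the same route as the paper: expansion in the sine basis, explicit resolution of the modal ODEs (your coefficients $A_k,B_k$ reproduce exactly the paper's closed-form solution $\theta=\sum_k \eta_k\sin(\tfrac{k\pi}{R}y)\,\cosh(\tfrac{k\pi}{R}(x-x_\Gamma-R))/(\tfrac{k\pi}{R}\sinh(k\pi))$), and the uniform bound $\coth(k\pi)\le\coth(\pi)$, yielding the identical constant $C_1=1/\sqrt{\tanh(\pi)}$. The only cosmetic difference is that you evaluate the modal energies by integration by parts, collapsing them to boundary terms, whereas the paper computes the two gradient contributions directly; both give the same expression $\sum_k \eta_k^2\frac{R^2}{2k\pi\tanh(k\pi)}$.
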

\begin{proof}
	For any $\eta \in H_{00}^{- \nicefrac{1}{2}} (\Gamma)$ written as \eqref{eq:etaFourier}, by direct calculation, $\theta$ is given by
	$$
		\theta = \sum_{k \geqslant 1} \eta_k \sin \left( \frac{k \pi}{R} y \right) 
		\frac{\cosh \left( \frac{k \pi}{R} (x - x_{\Gamma}-R)\right)}{\frac{k \pi}{R} \sinh (k \pi)}    \,. 
	$$
	Thus, denoting $x_{\Gamma}^R=x_{\Gamma}+R$,
			\begin{multline*} 
				\norm[\omega_R^{\Gamma}]{\nabla \theta}^2 = \sum_{k \geqslant 1} \left(\frac{\eta_k }{\sinh (k \pi)}\right)^2 \times\\
				\left(
				\norm[\omega_R^{\Gamma}]{\sin \left( \frac{k \pi}{R}y \right) \sinh \left( \frac{k \pi}{R} (x-x_{\Gamma} -R) \right)}^2 
				+\norm[\omega_R^{\Gamma}]{\cos \left( \frac{k \pi}{R}y \right) \cosh \left( \frac{k \pi}{R} (x-x_{\Gamma}^R )\right) }^2 
				\right)\\
				=   \sum_{k \geqslant 1} \eta_k^2 \frac{R^2}{2k\pi\tanh(k \pi)}\leqslant \frac{1}{\tanh(\pi)}\sum_{k \geqslant 1} \eta_k^2 \frac{R^2}{2k\pi} \coloneqq  C_1^2  \| \eta \|_{ -\nicefrac{1}{2}, \Gamma}^2\,,
			\end{multline*}
	since  $\tanh(k \pi)\geqslant \tanh(\pi)$ for $k\geq 1$. 
\end{proof}
\begin{lemma}\label{lemma:2}
	For any $\eta \in H_{00}^{\nicefrac{1}{2}} (\Gamma)$, let $\theta$ be the solution to
	\begin{align}
		&\LAPL \theta = 0 \text{ in } \omega^{\Gamma}_R \,,\nonumber \\
		&\theta = \eta \text{ on } \Gamma \,, \nonumber\\
		&\theta = 0 \text{ on } \Gamma_R\cup(\GammaWallt\cap \partial\omega_{R}^{\Gamma})\,.\nonumber
	\end{align}
	Then 
	$\norm[\omega_R^{\Gamma}]{\nabla \theta} \leqslant C_2  \norm[\nicefrac{1}{2},\Gamma]{\eta}$ with $C_2 > 0$
	which does not depend on $R$.
\end{lemma}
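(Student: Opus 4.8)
The plan is to reproduce, \emph{mutatis mutandis}, the explicit Fourier computation already carried out for \cref{lemma:1}. The domain $\omega_R^{\Gamma}$ is again the rectangle $(x_\Gamma,x_\Gamma+R)\times(0,R)$, and the homogeneous Dirichlet conditions imposed on $\GammaWallt\cap\partial\omega_R^{\Gamma}$ (that is, on the horizontal sides $y=0$ and $y=R$) once more single out the sine basis in the $y$ variable. First I would expand the Dirichlet datum as in \eqref{eq:etaFourier}, $\eta=\sum_{k\ge1}\eta_k\sin\big(\tfrac{k\pi}{R}y\big)$, and solve the harmonic problem by separation of variables. The only difference with \cref{lemma:1} lies in the conditions on the two vertical sides: now $\theta=\eta$ on $\Gamma$ (at $x=x_\Gamma$) and $\theta=0$ on $\Gamma_R$ (at $x=x_\Gamma+R$), so for each mode the admissible $x$-profile is the hyperbolic sine vanishing at $x_\Gamma+R$, normalised by its value at $x_\Gamma$. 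This gives the explicit solution
\begin{equation*}
\theta=\sum_{k\ge1}\eta_k\sin\Big(\frac{k\pi}{R}y\Big)\,\frac{\sinh\big(\frac{k\pi}{R}(x_\Gamma+R-x)\big)}{\sinh(k\pi)}\,,
\end{equation*}
which one checks directly to be harmonic and to satisfy all four boundary conditions.

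Next I would compute $\norm[\omega_R^{\Gamma}]{\nabla\theta}^2$ exactly as in \cref{lemma:1}. Writing $\alpha=\tfrac{k\pi}{R}$, the orthogonality of $\{\sin(\alpha y)\}$ and of $\{\cos(\alpha y)\}$ over $(0,R)$ eliminates all cross terms, so that the squared gradient norm splits into a sum over $k$ of the contribution of $\partial_x\theta$ (carrying $\sin^2$) and of $\partial_y\theta$ (carrying $\cos^2$), each $y$-integral producing a factor $R/2$. The remaining $x$-integrals are the elementary $\int_0^{k\pi}\cosh^2 s\,\mathrm{d}s$ and $\int_0^{k\pi}\sinh^2 s\,\mathrm{d}s$; their sum collapses, using $\cosh^2 s+\sinh^2 s=\cosh(2s)$ and $\sinh(2k\pi)=2\sinh(k\pi)\cosh(k\pi)$, to $\alpha\coth(k\pi)$. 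Collecting the factors yields
\begin{equation*}
\norm[\omega_R^{\Gamma}]{\nabla\theta}^2=\sum_{k\ge1}\eta_k^2\,\frac{k\pi}{2}\coth(k\pi)\,.
\end{equation*}

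Finally, comparing with the definition of the norm, $\norm[\nicefrac12,\Gamma]{\eta}^2=\sum_{k\ge1}\tfrac{R}{2}\eta_k^2\big(\tfrac{k\pi}{R}\big)=\sum_{k\ge1}\eta_k^2\tfrac{k\pi}{2}$, and using that $\coth$ is decreasing on $(0,\infty)$, so that $\coth(k\pi)\le\coth(\pi)$ for every $k\ge1$, I would conclude
\begin{equation*}
\norm[\omega_R^{\Gamma}]{\nabla\theta}^2\le\coth(\pi)\,\norm[\nicefrac12,\Gamma]{\eta}^2\,,
\end{equation*}
that is, the claim with $C_2=\sqrt{\coth(\pi)}$. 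The one point that genuinely requires care — the analogue of the only delicate step in \cref{lemma:1} — is checking that all powers of $R$ cancel: the factor $R/2$ from the $y$-integrals, the $\alpha=k\pi/R$ surfacing from the $x$-integrals, and the weight $(k\pi/R)^{2s}$ hidden in the $H_{00}^{\nicefrac12}$ norm must combine so that $C_2$ is a pure number. This is exactly what happens above, and the resulting uniformity in $R$ is precisely what makes the estimate usable in \cref{thm:globalBounds}.
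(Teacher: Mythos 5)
Your proof is correct and takes essentially the same approach as the paper: the paper's proof exhibits exactly the same separated-variables solution $\theta=\sum_{k\ge1}\eta_k\sin\left(\frac{k\pi}{R}y\right)\frac{\sinh\left(\frac{k\pi}{R}(x_\Gamma+R-x)\right)}{\sinh(k\pi)}$ and then simply invokes ``calculations similar to those in the proof of Lemma~\ref{lemma:1}'', which are precisely the computations you spell out. Your constant $C_2=\sqrt{\coth(\pi)}$ agrees with the paper's conclusion that $C_2=C_1=1/\sqrt{\tanh(\pi)}$, and your final remark about the cancellation of all powers of $R$ is exactly the point of the uniformity-in-$R$ claim.
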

\begin{proof}
	For any $\eta \in H_{00}^{\nicefrac{1}{2}} (\Gamma)$ written as \eqref{eq:etaFourier}, by direct calculation the solution $\theta$ is given by
	$$
		\theta = \sum_{k \geqslant 1} \eta_k \sin \left( \frac{k \pi}{R} y \right) 
		\frac{\sinh \left( \frac{k \pi}{R} (x_{\Gamma} + R - x)\right)}{\sinh (k \pi)} \,.    
	$$
	Calculations similar to those in the proof of Lemma \ref{lemma:1} lead to the desired result with $C_2=C_1$ of the previous lemma. 
\end{proof}

\section{Mesh adaptation in FreeFEM}\label{sec:adaptationFreefem}
The traditional strategy for the mesh adaptation (see, for example, \cite[Chapter 2]{verfurth} for a review) consists in marking certain mesh cells to refine according to the error indicator, and then splitting them in smaller cells. The library FreeFEM, used in our numerical experiments, does not provide tools for such a procedure: one cannot ask it to split only some specific cells, without touching the others. FreeFEM proposes instead the function \verb?adaptmesh?
that creates an entirely new mesh with  the mesh size prescribed (approximately) at every point of the entire computation domain (it is also possible to generate anisotropic meshes, but we deal here only with the isotropic version, option \verb?IsMetric=1?, where all the mesh cells are close to equilateral triangles).

We describe here an alternative mesh adaptation strategy from \cite{Olga18}, which we name ``hopt'' (for ``$h$ optimal''). %This algorithm allows us to make mesh adaptivity with FreeFEM that, to the best of our knowledge, up to now has no function to refine the same mesh, but it needs to compute entirely new mesh for each adaptation step.
%The goal is, given the current mesh $\Th$ on $\Omegat$ and the error indicators $\eta_T^D$ on all the cells $T\in\Th$, to construct a new mesh $\Th^{new}$ by prescribing the desired mesh sizes everywhere on $\Omegat$. 
We start from the following (admittedly not always realistic) assumption: the error between the exact solution $\tu$ and the approximate solution  $\uht$ on $\textit{any}$ mesh $\Th$ of $\Omegat$ is approximately given by
\begin{equation}\label{eq:intform}
	\norm[1,\Omegat]{ \tu - \uht}^2 \approx \int_{\Omegat} {h}^{2 \delta}(x) c^2 (x) \tmop{dx}
\end{equation}
where $h(x)$ is the mesh size distribution, i.e.  $h(x)=h_T$ on any $T \in \Th$, $\delta$ is a fixed parameter, and $c (x)$  is some \textit{a priori} unknown function.
%This is reasonable for example for \RT{$\Po{2}$ FEM with $\delta = 2$, $c (x) \sim |D^3 u| (x)$,
%\textit{i.e.} the norm of the third order derivatives at $x$}, provided $u$ sufficiently smooth.
Note also that the number of DoF is approximately given in the 2D case by
$$
N_{\tmop{DoF}} \sim \int_{\Omegat} \frac{\tmop{dx}}{h^2 (x)}\,,
$$
since a regular triangle of diameter $h$ occupies the area of order $h^2$.
Let us imagine momentarily that we know $c(x)$ and we want to construct an optimal mesh with the minimal possible $N_{\tmop{DoF}}$ to achieve a given error $\epsilon$, i.e. $\norm[1,\Omegat]{ u - \uht} = \epsilon$. This is a
constrained minimization problem for the mesh size distribution $h(x)$, i.e. minimize $\int_{\Omegat} \frac{\tmop{dx}}{h^2 (x)}$ under
$\int_{\Omegat} {h}^{2 \delta}(x) c^2 (x) \tmop{dx}=\epsilon^2$, which gives the following optimal mesh size distribution
$$
h_{\tmop{opt}} (x) =
        \frac{\epsilon^{\nicefrac{1}{\delta}}}{\left( \int_{\tilde{\Omega}}
   c^{2 / (\delta + 1)} (x) \tmop{dx} \right)^{^{\nicefrac{1}{(2 \delta)}}}}
   \frac{1}{(c(x))^{\nicefrac{1}{(\delta + 1)}}}\,.
$$
Of course, $c(x)$ is not known in practice, but, on a given mesh $\widetilde{\Th}$,
we have the estimators $\eta^\mathrm{D}_T$, see \eqref{eq:etaD}. It seems thus reasonable to expect that
$$
\norm[1,\Omegat]{ \tu - \uht}^2
\approx (\eta^\mathrm{D})^{2} := \sum_{\K \in \widetilde{\Th}} (\eta_{\K}^\mathrm{D})^{2}
    =  \sum_{\K \in \widetilde{\Th}} \norm[\K]{ \mathbf{\sigma}_{h}^\mathrm{D}  +\nabla \uht }^{2}  \,.
$$
Reinterpreting the error in the form \cref{eq:intform}, and localizing to each triangle of the current mesh $\widetilde{\Th}$, suggests
$$
 \int_{\K} h^{2 \delta} (x) c^2 (x) \tmop{dx} \sim
 (\eta_{\K}^\mathrm{D})^{2}, \quad \forall \K \in \widetilde{\Th}\,.
$$
We can thus approximate $c(x)$ on any triangle $\K \in \widetilde{\Th}$ by
$c (x) \approx \frac{\eta_{\K}^\mathrm{D}}{h_{\K}^{\delta} \sqrt{| \K |}}$, which gives
\begin{equation}\label{eq:hopt1}
h_{\tmop{opt}} (x) =
        \frac{\epsilon^{\nicefrac{1}{\delta}}}
            {\left(
            \sum_{\K \in \Th} (\eta_{\K}^\mathrm{D})^{\nicefrac{2}{(\delta + 1)}} h_{\K}^{\nicefrac{-2\delta}{(\delta + 1)}} | \K |^{  \nicefrac{\delta}{(\delta + 1)}}
            \right)^{^{\nicefrac{1}{(2 \delta)}}}}
        \frac{h_{\K}^{\nicefrac{\delta}{(\delta + 1)}} | \K |^{\nicefrac{1}{(2 \delta + 2)}}}
            {(\eta_{\K}^\mathrm{D})^{\nicefrac{1}{(\delta + 1)}}}  \quad
  \tmop{for} x \in \K\,.
\end{equation}
In practice, given a mesh $\widetilde{\Th}$, we want to reduce the error $R_e$ times (with $R_e>1$ a parameter to be specified). We thus put $\epsilon=\eta^D/R_e$ in \eqref{eq:hopt1} and give the resulting $h_{\tmop{opt}}$ mesh size distribution to the mesh-generating function of FreeFEM. This is the procedure we have used for mesh adaptation in  \cref{algo:adaptive}. There are two parameters ($\delta$ and $R_e$) to be specified. Based on some numerical experimentation (not reported here, but given in \cite{HusseinPHD}) we have chosen $\delta=1$ and $R_e=4$.
%So, the ``hopt'' algorithm is given in \cref{algo:hopt} considering to use the software FreeFEM \cite{Hecht12}.

\section*{Acknowledgments}
We are grateful to Fei Gao for giving the initial impetus to this work and for sharing with us some of his expertise on fuel cells, and to  Martin Vohralik for his interest in this work and for several stimulating and enlightening discussions that has helped us to improve it. 

\bibliographystyle{siamplain}
\bibliography{references}

\end{document}

% --- supplement: ex_supplement.tex ---

\maketitle

\section{A detailed example}

Here we include some equations and theorem-like environments to show
how these are labeled in a supplement and can be referenced from the
main text.
Consider the following equation:
\begin{equation}
  \label{eq:suppa}
  a^2 + b^2 = c^2.
\end{equation}
You can also reference equations such as \cref{eq:matrices,eq:bb} 
from the main article in this supplement.

\lipsum[100-101]

\begin{theorem}
An example theorem.
\end{theorem}

\lipsum[102]
 
\begin{lemma}
An example lemma.
\end{lemma}

\lipsum[103-105]

Here is an example citation: \cite{KoMa14}.

\section[Proof of Thm]{Proof of \cref{thm:bigthm}}
\label{sec:proof}

\lipsum[106-112]

\section{Additional experimental results}
\Cref{tab:foo} shows additional
supporting evidence. 

\begin{table}[htbp]
\footnotesize
  \caption{Example table.}  \label{tab:smfoo}
\begin{center}
  \begin{tabular}{|c|c|c|} \hline
   Species & \bf Mean & \bf Std.~Dev. \\ \hline
    1 & 3.4 & 1.2 \\
    2 & 5.4 & 0.6 \\ \hline
  \end{tabular}
\end{center}
\end{table}

\bibliographystyle{siamplain}
\bibliography{references}